\numberwithin{equation}{section}
\newtheorem{theorem}{Theorem}[section]
\newtheorem{lemma}[theorem]{Lemma}
\newtheorem{proposition}[theorem]{Proposition}
\newtheorem{corollary}[theorem]{Corollary}
\newtheorem{definition}[theorem]{Definition}
\newtheorem{example}[theorem]{Example}
\newcommand{\Var}{\mathrm{Var}}
\newenvironment{proof}[1][Proof]{\begin{trivlist}
\item[\hskip \labelsep {\bfseries #1}]}{\end{trivlist}}
\newenvironment{remark}[1][Remarks]{\begin{trivlist}
\item[\hskip \labelsep {\bfseries #1}]}{\end{trivlist}}
\newcommand*{\QEDA}{\hfill\ensuremath{\blacksquare}}%
\begin{document}

\title{On the local times of stationary processes with  conditional local limit theorems}

\author{Manfred Denker \footnote{mhd13@psu.edu}, Xiaofei Zheng \footnote{xxz145@psu.edu}}
\affil{Department of Mathematics, The Pennsylvania State University, \\State College, PA, 16802, USA}
\date{}
\maketitle

\begin{abstract}
We investigate the connection between conditional local limit theorems and the local time of integer-valued stationary  processes. We show that a conditional local limit theorem (at $0$) implies the convergence of local times to Mittag-Leffler distributions, both in the weak topology of distributions and a.s. in the space of distributions.
\end{abstract}

\section{Introduction and Main Results}\label{sec:1}
Local time characterizes the amount of time a process spends at a given level.  Let $X_1, X_2, ...$ be  integer-valued random variables, $S_{n}=\sum_{i=1}^{n}X_i$.   The local time of $\{S_n\}$ on level $x$ at time $n$ is defined to be $\ell(n,x):=\#\{i=1, 2,\cdots, n: S_{i}=x\}$. Denote by $\ell_n$ the local time at $0$ at time $n$ for short.  For simple random walks, the exact and the limit distributions of ${\ell_n}$ are well  known. Chung and Hunt  (1949) \cite{HuntChung} studied the limit behavior of the sequence of $\{\ell_n\}$. R{\'e}v{\'e}sz (1981) \cite{Revesz} proved an almost sure invariance principle  by Skorokhod embedding.  For more general random walks, Borodin (1984) \cite{Borodin} established the weak convergence of $\ell(x\sqrt{n},[nt])/\sqrt{n}$ of a recurrent random walk to the Brownian local time. Ale{\v{s}}kevi{\v{c}}ien{\.e} (1986) \cite{AA} gave the asymptotic distribution and moments of local times of an aperiodic recurrent random walk. Bromberg and Kosloff (2012) \cite{BroKos} proved weak invariance principle for the local times of partial sums of Markov Chains. Bromberg (2014) \cite{MB} extended it to Gibbs-Markov processes.

In this article we study the connection of local times and local limit theorems as stated below.
\begin{definition}\label{clltdef}
A centered  integer-valued stationary  process $\{X_n\}$ is said to have  a conditional local limit theorem at $0$, if there exists a constant $g(0)>0$ and  a sequence $\{B_n\}$ of positive real numbers, such that  for all $x\in \mathbb Z$
\begin{equation}\label{cllt}
\lim_{n\to \infty}B_n P(S_n=x| (X_{n+1},X_{n+2},...)=\cdot)= g(0)\  \quad  \mbox{\rm }
\end{equation}
almost surely.
\end{definition}

The full formulation of the corresponding form of a local limit theorem goes back to Stone and reads in the conditional form (see \cite{Jon3}) that
\begin{equation}\label{full}
\lim_{n\to \infty}B_n P(S_n=k_n| (X_{n+1},X_{n+2},...)=\cdot)= g(\kappa)\  \quad  \mbox{\rm as $ \frac{k_n-A_n}{B_n}\to \kappa$ \ $P$-a.s.}
\end{equation}
for all $\kappa\in \mathbb R$, where $A_n$ is some centering constant. 
Condition (\ref{clltdef}) can be reformulated using the dual operator $P_T$ of the isometry $Uf=f\circ T$ on $L^\infty(P)$ operating on $L^1(P)$, where we take the $L^p$-spaces of $P$ restricted to the $\sigma$-field generated by all $X_n$ and $T$ the shift operation $T(X_1,X_2,...)=(X_2,X_3,...)$. This operator is called the transfer operator. The local limit theorem at $0$ then reads
\begin{equation}
\lim_{n\to \infty}B_n P_{T}^n(\mathbb 1_{\{S_n=x\}})=g(0)\;\qquad \text{\rm for all $x\in \mathbb Z$,\  $P$-a.s.}.
\end{equation}

In this paper, we assume that $\{X_n\}$ has the conditional local limit theorem at $0$ as formulated in (\ref{clltdef}).

\begin{remark}
For the full formulation (\ref{full}), if the convergence is uniformly for almost all $\omega$,
it  would imply that $\{X_n\}$ has a local limit theorem, then by \cite{IbragimovLinnik}, 
$\{X_n\}$ are in the domain of attraction of a stable law with index $d$:
$$\dfrac{S_n-A_n}{B_n}\stackrel{\mathcal{W}}{\longrightarrow}Z_d.$$
The  probability density function  of $Z_d$  is $g$ as above and the cumulative distribution function  is denoted by $G(x)$.
Since $\int_{\Omega}\phi ~dP=0$, we can (and will) assume that  $A_n=0$. It is necessary \cite{IbragimovLinnik} that $\{B_n\}$ is regularly varying of order $\beta=1/d$. 
\end{remark}

Next we state the main results of this paper.
\begin{theorem}[Convergence of local times]\label{main}
Suppose that the integer-valued stationary  process $\{X_n:=\phi \circ  T^{n-1}: n\geq 1\}$ has a conditional local limit theorem at $0$ (\ref{cllt}) with regularly varying scaling constants $B_n=n^{\beta}L(n)$, where $\beta\in [\frac{1}{2},1)$ and $L$ is a  slowly varying function. Put $a_n:=g(0)\sum_{k=1}^n\dfrac{1}{B_k}\to \infty$. Then $\dfrac{\ell_n}{a_n}$ converges to  a random variable $ Y_{\alpha}$ strongly in distribution, i.e.
\begin{eqnarray}
\int_{\Omega}g\bigg(\dfrac{\ell_n(\omega)}{a_n}\bigg)H(\omega)dP(\omega)\to E[g(Y_{\alpha})],
\end{eqnarray}
for any bounded and continuous function $g$ and any probability density function $H$ on $(\Omega, \mathcal{F}, P),$ and $Y_{\alpha}$ has the normalized  Mittag-Leffler distribution of order $\alpha=1-\beta$. 
\end{theorem}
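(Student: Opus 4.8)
The plan is to deduce the convergence of $\ell_n/a_n$ from the conditional local limit theorem by a moment method, adapted to the "strong distributional convergence" framework. The Mittag-Leffler distribution of order $\alpha$ is characterized by its moments, $E[Y_\alpha^p] = p! \,\Gamma(1+\alpha)^p / \Gamma(1+p\alpha)$ (after the normalization that makes $E[Y_\alpha]=1$), so it suffices to show that for every integer $p\ge 1$,
\begin{equation*}
\int_\Omega \left(\frac{\ell_n(\omega)}{a_n}\right)^p H(\omega)\,dP(\omega) \longrightarrow E[Y_\alpha^p]
\end{equation*}
for every probability density $H$, together with a uniform integrability / tightness argument upgrading moment convergence to the claimed convergence against all bounded continuous $g$. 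The first reduction is therefore to compute $\int \ell_n^p\, H\, dP$ asymptotically.

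**Next I would** expand the $p$-th moment combinatorially. Since $\ell_n = \sum_{i=1}^n \mathbb 1_{\{S_i=0\}}$, we get $\ell_n^p = \sum_{i_1,\dots,i_p} \mathbb 1_{\{S_{i_1}=0\}}\cdots \mathbb 1_{\{S_{i_p}=0\}}$, and the dominant contribution comes from ordered tuples $i_1 < i_2 < \cdots < i_p$, contributing a factor $p!$. For such a tuple, $\mathbb 1_{\{S_{i_1}=0\}}\cdots \mathbb 1_{\{S_{i_p}=0\}}$ forces the increments $S_{i_{j+1}} - S_{i_j}$ over disjoint blocks to vanish. Here the conditional local limit theorem is exactly the right tool: conditioning successively on the future $\sigma$-algebras and using the transfer-operator form $B_m P_T^m(\mathbb 1_{\{S_m = x\}}) \to g(0)$, one estimates $E[\mathbb 1_{\{S_{i_1}=0\}}\cdots \mathbb 1_{\{S_{i_p}=0\}}\, H]$ by a product $\prod_{j} \frac{g(0)}{B_{i_{j+1}-i_j}}$ (with $i_0 := 0$, and the leftmost factor absorbing $H$ via $\int H\,dP = 1$). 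Summing over $0 < i_1 < \cdots < i_p \le n$ turns this into $p! \sum_{0<i_1<\cdots<i_p\le n} \prod_j \frac{g(0)}{B_{i_{j+1}-i_j}}$, which by a standard Riemann-sum / regular-variation computation (using $B_n = n^\beta L(n)$ and $a_n = g(0)\sum_{k\le n} B_k^{-1} \sim \frac{g(0)}{1-\beta} n^{1-\beta}/L(n)$) is asymptotic to $a_n^p \cdot p! \,\Gamma(1+\alpha)^p/\Gamma(1+p\alpha)$ with $\alpha = 1-\beta$ — the Mittag-Leffler moment. The off-diagonal bookkeeping (tuples that are not strictly ordered, or where some consecutive indices coincide) must be shown to be lower order, which is routine since such terms involve fewer free "gaps."

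**The main obstacle** I expect is making the successive-conditioning estimate uniform enough over all the gap configurations to justify replacing each conditional probability $P(S_{i_{j+1}} - S_{i_j} = 0 \mid \text{future})$ by its limit $g(0)/B_{i_{j+1}-i_j}$ inside the multiple sum — the CLLT as stated is an $n\to\infty$ pointwise-a.s. statement, not a uniform-in-$x$ or uniform-in-starting-point bound, and by stationarity the increment $S_{i_{j+1}}-S_{i_j}$ over a shifted block is distributed like $S_{i_{j+1}-i_j}$ only after the shift, so one has to be careful that the "future" being conditioned on is compatible across blocks. The way I would handle this is to peel the indicators from the right using the tower property and the transfer operator: write $E[\prod_j \mathbb 1_{\{S_{i_j}=0\}} H]$ as an iterated application of $P_T^{\,i_{j+1}-i_j}$ to the product of the remaining (earlier) indicators times $H$, and control the error terms via a dominated-convergence argument in the multiple sum, splitting the sum at a threshold $i_1 > \epsilon n$ (so all gaps that matter are large) and bounding the small-gap part crudely. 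A secondary obstacle is the upgrade from moment convergence to strong distributional convergence against all bounded continuous $g$ and all densities $H$: here one shows $(\ell_n/a_n)$ is tight in the strong sense (e.g. the first or second moment bound gives uniform integrability), and then invokes the fact that the Mittag-Leffler law is moment-determinate, concluding via the method-of-moments characterization of strong distributional convergence.
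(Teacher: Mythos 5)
Your moment-method plan is a genuinely different route from the paper's, and the obstacle you flag in your third paragraph is precisely where the plan has a real gap that your proposed fix does not close.

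The paper does \emph{not} compute moments directly. Instead it constructs the skew product $\tilde T(\omega,m)=(T\omega,\,m+\phi(\omega))$ on $\Omega\times\mathbb Z$ with the infinite measure $\mu=P\otimes m_{\mathbb Z}$, observes via \eqref{TTildeT} that $\ell_n(\omega)=\sum_{i=1}^n\mathbb 1_{\Omega\times\{0\}}(\tilde T^i(\omega,0))$, and then (i) uses $\sum 1/B_n=\infty$ to get conservativity, (ii) takes an ergodic decomposition $\mu=\int\mu_y\,d\lambda(y)$, (iii) shows each $(X,\mu_y,\tilde T)$ is pointwise dual ergodic by combining the a.s.\ CLLT with Hurewicz's ratio ergodic theorem, and (iv) applies Aaronson's strong distributional limit theorem fiberwise, finishing with the disintegration to handle a general density $H$. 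The crucial structural point is that Hurewicz's theorem replaces the uniformity you would need: it turns the a.s.\ pointwise limit $B_nP_T^n\mathbb 1_{\{S_n=0\}}\to g(0)$ into convergence of \emph{Cesàro averages} $\frac{1}{a_n}\sum_{k\le n}P_{\tilde T}^k f\to \int f\,d\mu_y$ for every $f\in L^1(\mu_y)$, which is exactly the statement Aaronson's theorem needs.

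Your approach, as sketched, needs more than the hypothesis provides. To make
\[
E\Bigl[\,\prod_{j=1}^p\mathbb 1_{\{S_{i_j}=0\}}\,H\Bigr]\;\approx\;\prod_{j=1}^p\frac{g(0)}{B_{i_j-i_{j-1}}}
\]
rigorous you must peel indicators with $P_T$, producing expressions of the form $E\bigl[P_T^{i_1}(\mathbb 1_{\{S_{i_1}=0\}}H)\cdot\prod_{j\ge2}\mathbb 1_{\{S_{i_j-i_1}=0\}}\bigr]$. Two problems arise. First, the CLLT controls $P_T^{i_1}\mathbb 1_{\{S_{i_1}=0\}}$ a.s.\ but says nothing about $P_T^{i_1}(\mathbb 1_{\{S_{i_1}=0\}}H)$ for a general density $H$, so the claim that the leftmost factor ``absorbs $H$ via $\int H\,dP=1$'' is unjustified; the paper handles $H$ entirely through the disintegration over $y$, not through the moment calculation. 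Second, even with $H\equiv 1$, $P_T^{i_1}\mathbb 1_{\{S_{i_1}=0\}}$ is a \emph{random} function that converges a.s., not uniformly, and it sits inside an expectation multiplied by indicators of the first $i_p-i_1$ coordinates; splitting the sum at $i_1>\epsilon n$ does not decouple that dependence, and there is no dominated-convergence argument available without a bound like $\sup_n\|B_nP_T^n\mathbb 1_{\{S_n=0\}}\|_\infty<\infty$, which the hypothesis does not give. This is essentially the distinction between a ``uniform (Darling--Kac) set'' and pointwise dual ergodicity; the classical moment method needs the former, and Aaronson's theorem was developed precisely to dispense with it under regular variation of $a_n$. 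In short, if you want to run the moment method you would have to re-prove the core of Aaronson's theorem along the way, whereas the paper simply builds the skew product and invokes it.

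A separate, smaller point: even granting the moment asymptotics, you would still need the moment-determinacy of the Mittag--Leffler law and a uniform-integrability argument to upgrade to convergence against all bounded continuous $g$; the paper's route gives strong distributional convergence directly, with no moment-determinacy step.
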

\begin{remark}
In Theorem \ref{main}, $\beta\in [\frac{1}{2},1)$. It is because $\beta=\frac{1}{d}$, where $d\in (0,2]$ is the stability parameter of the stable law $Z_d$. Also, to ensure $a_n$ is divergent, $\beta$ has to be less than $1$.

Strong convergence in distribution is stronger than weak convergence. For the definitions of strong convergence in distribution and Mittag-Leffler distribution, we refer to \cite{Jon1} Sections 3.6 and 3.7. 
\end{remark}

The representation of the local time $\ell_n$ as ergodic sums in the proof  of Theorem \ref{main} enables us to apply the results in Aaronson and Denker (1990) \cite{Jon2} directly.  Estimates of the deviation and the upper bounds of local times of $\{S_n\}$ are given in the following theorem.

\begin{theorem}[Deviation and Upper bound]\label{deviation}
Suppose  $\{X_n\}$ satisfies all conditions in Theorem \ref{main}.    Then for every $\gamma>1$, there exists a constant $n_{\gamma}$ such that for all $n_{\gamma}\leq t\leq L_2(n)^2$, where $L_2(n)=\log \log n$, one has
\begin{equation}\label{dev}
e^{-\gamma(1-\alpha) t}\leq P(\ell_n\geq \dfrac{\Gamma(1+\alpha)}{\alpha^{\alpha}}ta_{n/t})\leq e^{-\frac{1}{\gamma}(1-\alpha)t}.
\end{equation}
In addition, if the return time process $R_n$ of $\ell_n$ is uniformly or strongly mixing from below \footnote{Refer to \cite{Jon2} for  uniformly or strongly mixing from below.}, where $R_n$ is the waiting time for $\ell_n$ to arrive at $0$ the $n$th time, then
\begin{equation}\label{sup}
\limsup_{n\to \infty} \dfrac{\ell_n}{a_{\frac{n}{L_2(n)}}L_2(n)}=K_{\alpha}, a.s.
\end{equation}
where $K_{\alpha}=\dfrac{\Gamma(1+\alpha)}{\alpha^{\alpha}(1-\alpha)^{1-\alpha}}$. 
\end{theorem}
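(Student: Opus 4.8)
The plan is to realize the local time $\ell_n$ as an ergodic sum, so that the large-deviation and law-of-the-iterated-logarithm machinery of Aaronson--Denker \cite{Jon2} for ergodic sums of positive functions over a conservative ergodic transformation becomes directly applicable. Concretely, I would first record the representation $\ell_n=\sum_{k=1}^{n}\mathbb 1_{\{S_k=0\}}$ and, via the transfer-operator reformulation given in the excerpt, identify $E[\ell_n]$ with $\sum_{k=1}^{n}P(S_k=0)$, which by the conditional local limit theorem (\ref{cllt}) and dominated convergence is asymptotically $g(0)\sum_{k=1}^{n}B_k^{-1}=a_n$. Since $B_n=n^{\beta}L(n)$ is regularly varying of index $\beta\in[\tfrac12,1)$, Karamata's theorem gives $a_n\sim \tfrac{g(0)}{1-\beta}\,n^{1-\beta}L(n)^{-1}=\tfrac{g(0)}{\alpha}\,n^{\alpha}L(n)^{-1}$ up to the slowly varying factor, so $a_n$ is regularly varying of index $\alpha=1-\beta\in(0,\tfrac12]$, and in particular $a_{n/t}\sim t^{-\alpha}a_n$ for fixed $t$. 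This is the asymptotic that turns the abstract Aaronson--Denker normalizations into the explicit expressions $\tfrac{\Gamma(1+\alpha)}{\alpha^\alpha}t\,a_{n/t}$ and $a_{n/L_2(n)}L_2(n)$ appearing in (\ref{dev}) and (\ref{sup}).

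For the deviation bound (\ref{dev}), I would invoke the precise moderate-deviation estimates for ergodic sums associated to pointwise dual ergodic transformations with regularly varying return sequence $a_n$ of index $\alpha$ — this is exactly the setting of Theorem 1.3 of Theorem \ref{main}, whose proof (as the excerpt announces) exhibits $\ell_n$ in this ergodic-sum form and yields convergence of $\ell_n/a_n$ to the normalized Mittag-Leffler law $Y_\alpha$. The Mittag-Leffler distribution $Y_\alpha$ has the well-known Laplace transform $E[e^{\lambda Y_\alpha}]=\sum_{k\ge0}\tfrac{\Gamma(1+\alpha)^k}{\Gamma(1+k\alpha)}\lambda^k$, whose logarithm grows like $c_\alpha\lambda^{1/(1-\alpha)}$ as $\lambda\to\infty$ with $c_\alpha=(1-\alpha)(\tfrac{\alpha}{\Gamma(1+\alpha)})^{\alpha/(1-\alpha)}$; a Legendre transform (Gärtner--Ellis, or the explicit tail asymptotics of $Y_\alpha$ due to the density behaving like $\exp(-c'_\alpha y^{1/\alpha})$) then produces the two-sided estimate in (\ref{dev}) in the stated regime $n_\gamma\le t\le L_2(n)^2$, the factor $\gamma$ absorbing the slowly varying corrections coming from $a_{n/t}/ (t^{-\alpha}a_n)$ uniformly over that range of $t$. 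The scaling constant $\tfrac{\Gamma(1+\alpha)}{\alpha^\alpha}$ is precisely what reconciles the normalization $a_n$ with the "standard" normalization under which the Aaronson--Denker deviation inequalities are stated.

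For the almost-sure upper bound (\ref{sup}), I would run the classical second-moment-plus-mixing argument: the upper estimate in (\ref{dev}), applied along $t=t_n\sim L_2(n)$ together with Borel--Cantelli along a geometric subsequence $n_j$, gives $\limsup \le K_\alpha$; the matching lower bound requires independence (or near-independence) of the increments of $\ell$ over disjoint time blocks, which is exactly why the extra hypothesis that the return-time process $R_n$ is uniformly or strongly mixing from below is imposed — this lets one use the lower deviation bound in (\ref{dev}) on a sequence of asymptotically independent events and a converse Borel--Cantelli argument to get $\limsup\ge K_\alpha$. The constant $K_\alpha=\tfrac{\Gamma(1+\alpha)}{\alpha^\alpha(1-\alpha)^{1-\alpha}}$ arises as $\sup_{t>0} e^{-(1-\alpha)t}\cdot(\text{value forced by Borel--Cantelli})$, i.e. optimizing $t^{1-\alpha}$ scaled against the exponential rate $(1-\alpha)$ — the factor $(1-\alpha)^{1-\alpha}$ in the denominator is the outcome of that optimization, consistent with the general iterated-logarithm constant in \cite{Jon2}.

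The main obstacle I anticipate is the \emph{uniformity} of the deviation estimate (\ref{dev}) over the whole window $n_\gamma\le t\le L_2(n)^2$: one must control the error in $a_{n/t}\approx t^{-\alpha}a_n$ uniformly in $t$ up to order $(\log\log n)^2$, which forces a uniform version of Karamata's theorem / Potter's bounds for the slowly varying factor $L$, and one must likewise verify that the Aaronson--Denker deviation inequalities hold with constants uniform in that range rather than merely pointwise in $t$. Establishing the lower bound in the a.s. statement — i.e.\ quantifying "mixing from below" of $R_n$ well enough to get genuinely asymptotically independent blocks with a usable lower deviation probability — is the second delicate point, and is the reason the hypothesis is stated rather than derived.
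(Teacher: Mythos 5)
Your proposal takes exactly the paper's route: the paper's entire proof of Theorem \ref{deviation} is the one sentence preceding its statement, namely that the ergodic-sum representation $\ell_n=S_n^{\tilde T}(\mathbb 1_A)(\cdot,0)$ (established in the proof of Theorem \ref{main}) places $\ell_n$ in the setting of a pointwise dual ergodic transformation with regularly varying return sequence $a_n$ of index $\alpha$, whence both the deviation inequality and the iterated-logarithm statement (under the mixing-from-below hypothesis on $R_n$) follow directly from Aaronson--Denker (1990) \cite{Jon2}. Your further reconstruction of what lies inside \cite{Jon2} is in the right spirit, but note a slip in the Mittag--Leffler asymptotics: since $E[e^{\lambda Y_\alpha}]=E_\alpha(\Gamma(1+\alpha)\lambda)$ and $E_\alpha(z)\sim\alpha^{-1}e^{z^{1/\alpha}}$ as $z\to\infty$, the log-MGF grows like $\Gamma(1+\alpha)^{1/\alpha}\lambda^{1/\alpha}$ (not $\lambda^{1/(1-\alpha)}$), and correspondingly the Legendre transform gives a tail of order $\exp\bigl(-c_\alpha y^{1/(1-\alpha)}\bigr)$ (not $\exp(-c'_\alpha y^{1/\alpha})$); with these exponents corrected the Legendre computation does reproduce the rate $(1-\alpha)t$ and the constants $\Gamma(1+\alpha)/\alpha^\alpha$ and $K_\alpha$ in the theorem, so the error is local to your heuristic and does not affect the validity of the approach.
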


The second part of the theorem was proved by Chung and Hunt in 1949 \cite{HuntChung} for simple random walks, by Jain and Pruitt \cite{Jain1} and Marcus and Rosen \cite{Marcus1} for more general random walks. 

\begin{corollary}[Gibbs-Markov transformation \cite{Jon3}] \label{Gibbs-Markov}
Let $(\Omega,\mathcal{B},P,T,\alpha)$ be a mixing, probability preserving Gibbs-Markov map (see \cite{Jon3} for definition), and let 
$\phi: \Omega \to \mathbb{Z}$
be  Lipschitz continuous on each $a\in \alpha$, with $$D_{\alpha}\phi:=sup_{a\in \alpha} D_{a}\phi =sup_{a\in \alpha} \sup_{x,y \in a} \dfrac{|\phi(x)-\phi(y)|}{d(x,y)}<\infty$$ and distribution $G$ in the domain of attraction of a stable law with order $1<d\leq 2$. Then $\{X_n:=\phi \circ T^{n-1}\}$ has a conditional local limit theorem with $B_n=n^{1/d}L(n)$, where $L(n)$ is a slowly varying function. By Theorem \ref{main}, the scaled local time of $S_n$ converges to Mittag-Leffler distribution strongly and (\ref{dev}) holds. 

If in addition, the return time process $R_n$ of the local time $\ell_n$ is uniformly or strongly mixing from below, then (\ref{sup}) holds.

\end{corollary}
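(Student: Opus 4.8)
The goal is to prove Corollary~\ref{Gibbs-Markov}, which asserts that a Gibbs--Markov map with a Lipschitz observable $\phi$ whose distribution lies in the domain of attraction of a stable law of index $d\in(1,2]$ falls under the hypotheses of Theorems~\ref{main} and~\ref{deviation}. Since those theorems are already available, the only real content is verifying that $\{X_n = \phi\circ T^{n-1}\}$ satisfies the conditional local limit theorem~\eqref{cllt} with the claimed scaling $B_n = n^{1/d}L(n)$, and (for the last assertion) reducing the mixing hypothesis on the return time process to the statement of Theorem~\ref{deviation}.

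\begin{proof}
The plan is to deduce everything from the conditional local limit theorem for Gibbs--Markov maps established in \cite{Jon3}, and then invoke Theorems~\ref{main} and~\ref{deviation} verbatim. First I would recall the relevant setup: for a mixing probability-preserving Gibbs--Markov map $(\Omega,\mathcal B,P,T,\alpha)$ and a function $\phi$ that is Lipschitz on each partition element $a\in\alpha$ with $D_\alpha\phi<\infty$ and with distribution in the domain of attraction of a stable law of index $1<d\le 2$, the aperiodicity/irreducibility conditions needed for a local limit theorem are automatic (or must be assumed as part of ``$G$ in the domain of attraction'' together with the lattice span of $\phi$ being $1$, which is the normalization implicit in $\phi:\Omega\to\mathbb Z$). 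Under these hypotheses \cite{Jon3} provides a Nagaev--Guivarc'h type spectral analysis of the Fourier transfer operators $P_T(e^{it\phi}\,\cdot\,)$: they admit a leading eigenvalue $\lambda_n(t)$ with the stable-law asymptotics $\lambda_n(t)\sim \exp(-n|t|^d L^\#(1/|t|))$, and the remainder of the spectrum is uniformly bounded away from the unit circle. Integrating the resulting expansion for $P_T^n(\mathbb 1_{\{S_n=x\}})$ against $e^{-itx}$ over $t\in[-\pi,\pi)$, exactly as in the classical Stone local limit theorem but now at the level of the transfer operator acting on the Lipschitz function space, yields
\[
B_n\,P_T^n\big(\mathbb 1_{\{S_n=x\}}\big)\longrightarrow g(0)\qquad P\text{-a.s., for every }x\in\mathbb Z,
\]
with $B_n = n^{1/d}L(n)$ regularly varying of index $\beta = 1/d$ and $g$ the density of the limiting stable law $Z_d$ at $0$; this is precisely the reformulation of \eqref{cllt} in terms of the transfer operator given in the Introduction. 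The scaling constant $g(0)>0$ because the stable density is strictly positive everywhere.

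With the conditional local limit theorem in hand, the hypotheses of Theorem~\ref{main} are met: $\{X_n\}$ is an integer-valued stationary process (stationarity comes from $T$ being probability preserving, centeredness from the assumption $\int\phi\,dP=0$ noted in the Introduction, together with the fact that $d>1$ makes $\phi$ integrable), and $B_n=n^{\beta}L(n)$ with $\beta = 1/d\in[\tfrac12,1)$ since $d\in(1,2]$. Hence Theorem~\ref{main} applies directly and gives strong convergence in distribution of $\ell_n/a_n$ to the normalized Mittag--Leffler law $Y_\alpha$ of order $\alpha = 1-\beta = 1 - 1/d$, where $a_n = g(0)\sum_{k=1}^n B_k^{-1}\to\infty$ (divergence holds because $\beta<1$). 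The same hypotheses are exactly those of Theorem~\ref{deviation}, so the deviation estimate \eqref{dev} holds as well, for every $\gamma>1$ and all $n_\gamma\le t\le L_2(n)^2$. Finally, if one additionally assumes that the return time process $R_n$ of $\ell_n$ is uniformly or strongly mixing from below, then the supplementary hypothesis in the second part of Theorem~\ref{deviation} is satisfied, and \eqref{sup} follows with $K_\alpha = \Gamma(1+\alpha)/(\alpha^\alpha(1-\alpha)^{1-\alpha})$.

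The main obstacle is the first step: establishing \eqref{cllt} for the Gibbs--Markov system. This is not a triviality, but it is genuinely a citation — the conditional (fibered) local limit theorem for Gibbs--Markov maps with stable-domain observables is precisely the kind of statement proved in \cite{Jon3} via the Aaronson--Denker operator-renewal and spectral-perturbation machinery; what needs care is checking that the ``conditional'' form \eqref{cllt} (convergence of $B_n P(S_n = x\mid(X_{n+1},X_{n+2},\dots))$ a.s.) rather than the unconditional annealed form is what that reference delivers, which it does because the transfer-operator expansion $B_nP_T^n(\mathbb 1_{\{S_n=x\}})\to g(0)$ holds in the sup-norm on the Lipschitz space and hence pointwise a.s. Beyond that, everything is a direct appeal to the two theorems already proved, with only the bookkeeping of identifying $\beta=1/d$, $\alpha = 1-1/d$, and verifying the range $\beta\in[\tfrac12,1)$ from $d\in(1,2]$.
\end{proof}
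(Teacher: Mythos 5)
Your proposal is correct and follows essentially the same route the paper takes: cite Aaronson--Denker \cite{Jon3} for the fibered (conditional) local limit theorem for Gibbs--Markov maps with a Lipschitz, stable-domain observable, observe that the resulting scaling $B_n=n^{1/d}L(n)$ gives $\beta=1/d\in[\tfrac12,1)$ precisely because $d\in(1,2]$, and then invoke Theorems~\ref{main} and~\ref{deviation} directly, with the extra mixing-from-below hypothesis on the return time process feeding into the second half of Theorem~\ref{deviation}. The paper treats the corollary as a citation plus bookkeeping, which is exactly what you do; the extra detail you supply about the Nagaev--Guivarc'h spectral expansion of $P_T(e^{it\phi}\cdot)$ and the aperiodicity normalization is accurate background but not required beyond what \cite{Jon3} already delivers.
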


In \cite{Jon3}, the conditions for  finite and countable state Markov chains and Markov interval maps to imply the  Gibbs-Markov property are listed. Applications of the foregoing results are straight forward, in particular to the behavior of partial sums when the Markov chain starts at independent values. 

%
%
%

Next, we show two examples of stationary processes whose local times converge to the Mittag-Leffler distribution.

\begin{example} [Continued Fractions]\label{cf}
Any irrational number $x\in (0,1]$ can be uniquely expressed as a simple non-terminating continued fraction $x=[0; c_1(x), c_2(x), \cdots]=: \frac{1}{c_1(x)+\frac{1}{c_2(x)+\frac{1}{c_3(x)+\cdots}}}$. The continued fraction transformation $T$ is defined by 
$$T(x)=x-[\frac{1}{x}].$$
Define $\phi: (0,1]\to \mathbb{N}$ by $\phi(x)=c_1(x)$ and  $X_n:=\phi\circ T^{n-1}$. We have the following convergence in distribution with respect to any absolutely continuous probability measure $m\ll \lambda$, where $\lambda$ is the Lebesgue measure, i.e.
$$\frac{\sum_{i=1}^n X_i}{n/\log 2}-\log n \to F,$$
where $F$ has a stable distribution (cf. eg. \cite{PW}).

Let $a_n:=\{x\in(0,1]: c_1(x)=n\}$ for every $n\in \mathbb{N}_+$ and  the partition is  $\alpha=\{a_n: n\in \mathbb{N}\}$. 
Then $(\Omega, \mathcal{B}, \mu, T, \alpha)$ is the continued fraction transformation where $\Omega=[0, 1]$. It is a mixing and measure preserving Gibbs-Markov map with respect to the Gauss measure $d\mu=\frac{1}{\ln 2}\frac{1}{1+x}dx$. Define the metric on $\Omega$ to be $d(x,y)=r^{\inf\{n: a_n(x)\neq a_n(y) \}}$, where $r\in (0,1).$ Note that $\phi$ is Lipschitz continuous on each partition. 

Define $(X,\mathcal{F},\nu, T_{X}, \beta)$ to be the direct product  of $(\Omega, \mathcal{B}, \mu, T, \alpha)$ with metric $d_X((x,y),(x',y'))=\max\{d(x,x'),d(y,y')\}.$ Then one can check that $(X,\mathcal{F},\nu, T_{X}, \beta)$  is still a mixing and measure preserving Gibbs-Markov map. Let $f: X\to \mathbb{Z}$ be defined by $f(x,y)=\phi(x)-\phi(y)$. Since $\phi$ is Lipschitz on partitions $\alpha$, so is $f$.  Define $Y_n((x,y))=f\circ T_X^{n-1}(x,y)= X_n(x)-X_n(y),  (x,y)\in X.$ $Y_n$ is in the domain of attraction of a stable law. Let $S_n:=\sum_{i=1}^n Y_i$ .The local time at level $0$ of $S_n$ is denoted to be $\ell_n(x,y)=\sum_{i=1}^n \mathbb 1_{\{S_i(x,y)=0\}}$.  By applying Corollary \ref{Gibbs-Markov} to the Gibbs-Markov map $(X,\mathcal{F},\nu, T_{X}, \beta)$ and the Lipschitz continuous function $f$, $S_n$ has a conditional local limit theorem and  the  local time $\ell_n$ converges to the Mittag-Leffler distribution after scaled and (\ref{dev}) holds for $\ell_n$. In particular this applies to the number of times that  the partial sum $\sum_{j\le i} \phi\circ T^j$ agree at times $i\le n$ when the initial values are chosen independently.

\end{example}

\begin{example}[$\beta$ transformation]\label{beta}
Fix $\beta>1$ and $T:[0,1]\to[0,1]$ is defined by $Tx:=\beta x$ mod $1$. Let $\phi: [0,1] \to \mathbb{Z}$ be defined as $\phi(x)=[\beta x]$ and $X_n(x)=\phi\circ T^{n-1}(x)=[\beta T^{n-1}x]$.  There exists an absolutely continuous invariant probability measure $P$. By \cite{Jon4}, there is a conditional local limit theorem for the partial sum $S_n$ of $\{X_n\}$. Then Theorem \ref{main} can be applied to $([0,1], \mathcal{B},  P, T)$ and $\{X_n\}$, it follows that the scaled local time of $S_n$ at level $E[\phi]$ converges to the Mittag-Leffler distribution and (\ref{dev}) holds when $E[\phi]$ is an integer. When $E[\phi]$ is not an integer, a similar product space as in Example \ref{cf} can be constructed and the same conclusion for the local time in the product space holds.
\end{example}
In the last part, we prove an almost sure weak convergence theorem of the local times. Almost sure central limit theorems were first introduced by Brosamler(1988) \cite{Brosamler} and Schatte (1988) \cite{Schatte}. 
It has been extended for several classes of independent and dependent random variables. For example, Peligrad and Shao (1995) \cite{PS} gave an almost sure central limit theorem for associated sequences, strongly mixing and $\rho$-mixing sequences under the same conditions that assure that the central limit theorem holds.  Gonchigdanzan (2002) \cite{Gon} proved an ASCLT for strongly mixing sequence of random variables under different conditions.  

However, the corresponding results  for the weak convergence  of local times are sparse. We only know that for aperiodic integer-valued random walks, Berkes, Istv{\'a}n and Cs{\'a}ki, Endre (2001) \cite{Uni} established an almost sure  limit theorem when $X_n$ is in the domain of attraction of a stable law of order $d\in(1,2]$. Here, we prove that an almost sure weak limit theorem holds for local times of stationary  processes when the local limit theorem holds in a stronger form than (\ref{clltdef}). 

\begin{definition}\label{cllt-exp}
An integer-valued  stationary process $(X_n)_{n\ge 1}$ is said to satisfy the $L^\infty$ conditional local limit theorem at $0$ if  there exists a sequence $g_n\in \mathbb R$ of real constants such that 
$$\lim_{n\to\infty} g_n=g(0)>0$$
and
$$ \| B_nP_{T^n}(S_n=x)-g_n\|_{{\infty}}$$
decreases  exponentially fast.
\end{definition}

This condition is  essentially stronger than condition (\ref{cllt}) holding  in $L^\infty(P)$ and the convergence is exponentially fast. 

\begin{theorem}[Almost sure central limit theorem for the local times] \label{ass3}
Let $(X_n)_{n\in \mathbb N}$ be an integer-valued stationary process satisfying the  local limit theorem at $0$ in Definition \ref{cllt-exp} and $B_n=n^{\beta}L(n)$ and slowly varying function $L(n)$ converges to $c>0$.   Moreover, assume that the following two conditions are satisfied:  for some constants $K>0$ and $\delta>0$ and for all bounded  Lipschitz continuous functions $g, F\in C_b(\mathbb R)$  and $x\in \mathbb Z$ it holds that
\begin{equation}\label{eq:cond1}
cov\left(g(\ell_k),F\circ T_{2k})\right) \le \left(\log\log k\right)^{-1-\delta}
\end{equation}
and
\begin{equation}\label{eq:cond2}
 \sum_{n=1}^\infty |E\left( \mathbb 1_{\{S_n=x\}}-\mathbb 1_{\{S_n=0\}}\right)|\le K(1+ |x|^{\frac\alpha{1-\alpha}}),
\end{equation}
where $\alpha=1-\beta.$
Then
\begin{equation}\label{1}
\lim_{N\to\infty} \dfrac{1}{\log N}\sum_{k=1}^N \dfrac{1}{k}\mathbb 1_{\{\frac{\ell_k}{a_k} \leq x\}}=M(x) \;\;a.s.
\end{equation}
is equivalent to

\begin{equation}\label{2}
\lim_{N\to\infty} \dfrac{1}{\log N}\sum_{k=1}^N \dfrac{1}{k}P\{\frac{\ell_k}{a_k} \leq x\}=M(x),
\end{equation}
where $M(x)$ is a cumulative distribution function.
\end{theorem}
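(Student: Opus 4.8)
The whole theorem reduces to the single almost sure fact that, for each fixed $x$,
\[
V_N(x):=\frac1{\log N}\sum_{k=1}^N\frac1k\Bigl(\mathbb 1_{\{\ell_k/a_k\le x\}}-P(\ell_k/a_k\le x)\Bigr)\longrightarrow 0\qquad P\text{-a.s.}
\]
Once this is known, (\ref{1}) and (\ref{2}) are equivalent; moreover, Definition \ref{cllt-exp} implies the conditional local limit theorem (\ref{cllt}), so Theorem \ref{main} applies, $\ell_n/a_n$ converges in distribution to the Mittag--Leffler law $Y_\alpha$ (whose distribution function $M$ is continuous since $\alpha<1$), and logarithmic Ces\`aro averaging shows that both (\ref{1}) and (\ref{2}) hold with this $M$. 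To cope with the discontinuous integrand, sandwich $\mathbb 1_{(-\infty,x]}$ between bounded Lipschitz functions $g_\varepsilon^-\le\mathbb 1_{(-\infty,x]}\le g_\varepsilon^+$ that agree with it off $(x-\varepsilon,x+\varepsilon]$; continuity of $M$ makes $\tfrac1{\log N}\sum_{k\le N}\tfrac1kP(\ell_k/a_k\in(x-\varepsilon,x+\varepsilon])$ eventually arbitrarily small, so it is enough to prove $\tfrac1{\log N}\sum_{k\le N}\tfrac1k\bigl(g(\ell_k/a_k)-\E g(\ell_k/a_k)\bigr)\to0$ a.s.\ for every bounded Lipschitz $g$, and, since this expression is unchanged by adding a constant to $g$, we may assume $g\ge0$.

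The core is the second--moment bound $\E[V_N^2]=O\bigl((\log\log N)^{-1-\delta}\bigr)$ for these centred Lipschitz averages, where now $V_N=\tfrac1{\log N}\sum_{k\le N}\tfrac1k(g(\ell_k/a_k)-\E g(\ell_k/a_k))$. Expanding, $\E[V_N^2]=(\log N)^{-2}\sum_{k,l\le N}(kl)^{-1}\Cov\bigl(g(\ell_k/a_k),g(\ell_l/a_l)\bigr)$. The near--diagonal block $\tfrac12k\le l\le2k$ contributes $O(1/\log N)$ because there the covariances are $O(1)$; for the rest, by symmetry of the covariance we may take $l\ge2k$. Split $\ell_l=\ell_{2k}+(\ell_l-\ell_{2k})$. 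Taking expectations in Definition \ref{cllt-exp} gives $B_nP(S_n=0)\to g(0)$, hence $\E\ell_n=\sum_{i\le n}P(S_i=0)\sim a_n$, so $\E[\ell_{2k}/a_l]\le C(k/l)^\alpha$ and replacing $\ell_l$ by $\ell_l-\ell_{2k}$ inside $g$ costs only an $L^1$--error $O((k/l)^\alpha)$. Next, $\ell_l-\ell_{2k}=\#\{2k<i\le l:S_i=0\}$ is the occupation time on $(2k,l]$ of level $0$, equivalently the occupation time of the time--$2k$--shifted walk at the \emph{random} level $-S_{2k}$; compare it with $\hat\ell_{k,l}:=\ell_{l-2k}\circ T^{2k}$, the occupation time of that shifted walk at $0$. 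Since $\hat\ell_{k,l}$ depends only on $X_{2k+1},\dots,X_l$, the quantity $g(\hat\ell_{k,l}/a_l)$ is of the form $F\circ T^{2k}$ and (\ref{eq:cond1}) gives $\Cov\bigl(g(\ell_k/a_k),g(\hat\ell_{k,l}/a_l)\bigr)\le(\log\log k)^{-1-\delta}$. The discrepancy $\E\bigl|g((\ell_l-\ell_{2k})/a_l)-g(\hat\ell_{k,l}/a_l)\bigr|$ is controlled by (\ref{eq:cond2}): $S_{2k}$ is typically of order $B_{2k}\asymp(2k)^{\beta}$, and since $\beta=1-\alpha$ one has $|S_{2k}|^{\alpha/(1-\alpha)}\asymp(2k)^{\alpha}\asymp a_{2k}$, so the cost of shifting the target level from $0$ to $-S_{2k}$, after dividing by $a_l$, is again of order $(k/l)^{\alpha'}$ for some $\alpha'>0$. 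Summing these contributions over $2k\le l\le N$ and then over $k\le N$, and using $\sum_{k\le N}\tfrac1k(\log\log k)^{-1-\delta}\asymp(\log N)(\log\log N)^{-1-\delta}$, yields the claimed bound.

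To pass to a.s.\ convergence, work along a subsequence $(N_j)$ with $\log\log N_j\sim j^{s}$ for a fixed $s\in(1/(1+\delta),1)$ (a nonempty interval since $\delta>0$): then $\sum_j\E[V_{N_j}^2]\lesssim\sum_j j^{-s(1+\delta)}<\infty$, so $V_{N_j}\to0$ a.s.\ by Borel--Cantelli. For $N_j\le N\le N_{j+1}$, the map $k\mapsto\sum_{i\le k}\tfrac1ig(\ell_i/a_i)$ is nondecreasing ($g\ge0$), and $\log N_{j+1}/\log N_j\to1$ because $(j+1)^s-j^s\to0$; hence the mass $\sum_{N_j<i\le N}\tfrac1i$ added past $N_j$ is $o(\log N_j)$, and both $\tfrac1{\log N}\sum_{i\le N}\tfrac1ig(\ell_i/a_i)$ and its mean differ from their values at $N_j$ by $o(1)$ uniformly on $[N_j,N_{j+1}]$. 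Therefore $V_N\to0$ a.s., which proves the equivalence of (\ref{1}) and (\ref{2}).

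The genuine difficulty is the middle step: turning the first--moment bound (\ref{eq:cond2}) into $L^1$ control of $(\ell_l-\ell_{2k})-\hat\ell_{k,l}=\sum_{2k<i\le l}\bigl(\mathbb 1_{\{S_i=0\}}-\mathbb 1_{\{S_i=S_{2k}\}}\bigr)$. The two occupation times count disjoint events, so their difference is not small pointwise, and (\ref{eq:cond2}) bounds only the difference of their expectations; to obtain an $L^1$ estimate one must bound the second moment of the displayed sum, which calls for a two--point (conditional) local limit estimate --- furnished by the exponentially fast convergence in Definition \ref{cllt-exp} --- together with a truncation on the event $\{|S_{2k}|\le CB_{2k}\}$ so that (\ref{eq:cond2}) may be applied level by level. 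The remaining ingredients (the Lipschitz reduction, the near--diagonal estimate, and the Borel--Cantelli plus gap--filling step) are routine.
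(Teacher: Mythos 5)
Your proposal is correct and follows essentially the same path as the paper: reduce the a.s.\ statement to a second-moment bound $\E[V_N^2]=O((\log\log N)^{-1-\delta})$ for logarithmic averages of centred bounded Lipschitz functionals of $\ell_k/a_k$; prove this by splitting into a near-diagonal block, a part handled by the shifted local time $\hat\ell_{k,l}=\ell_{l-2k}\circ T^{2k}$ (the paper's $a_l f_{(2k,l)}$) and condition (\ref{eq:cond1}), and the discrepancy $\E|\ell_l-\ell_{2k}-\hat\ell_{k,l}|$ handled via a second-moment/Jensen estimate using (\ref{eq:cond2}) and the exponential $L^\infty$-CLLT (the paper's Lemma 3.4); then conclude by Borel--Cantelli along $N_j=\exp\exp(j^s)$ with $s\in(1/(1+\delta),1)$ and a routine gap-fill. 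The step you flag as the "genuine difficulty" is exactly the content of Lemma 3.4 in the paper, which bounds $\E(\ell_j-\ell(j,S_{2k})-\ell_{2k}+\ell(2k,S_{2k}))^2$ by $O(a_j\E|S_{2k}|^{\alpha/(1-\alpha)})$; your sketch of how to obtain it is consistent with the paper's argument.
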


\begin{corollary} [Gibbs-Markov maps]\label{GM}
 The almost sure central limit theorem for the local times holds under the same setting as Corollary \ref{Gibbs-Markov}.
\end{corollary}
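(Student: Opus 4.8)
The plan is to obtain Corollary \ref{GM} from Theorem \ref{ass3}: I will verify, for the data of Corollary \ref{Gibbs-Markov}, the three hypotheses of Theorem \ref{ass3} --- the $L^\infty$ conditional local limit theorem of Definition \ref{cllt-exp}, the covariance bound (\ref{eq:cond1}), and the summability bound (\ref{eq:cond2}) --- and then check that the logarithmically averaged statement (\ref{2}) holds; the equivalence ``(\ref{1})$\Leftrightarrow$(\ref{2})'' supplied by Theorem \ref{ass3} then yields the almost sure limit (\ref{1}), which is exactly the asserted almost sure central limit theorem for the local times. The averaged statement (\ref{2}) is essentially free: Corollary \ref{Gibbs-Markov} already gives, through Theorem \ref{main}, strong --- hence weak --- convergence of $\ell_n/a_n$ to the normalized Mittag--Leffler law $Y_\alpha$, whose distribution function $M$ is continuous, so $P(\ell_k/a_k\le x)\to M(x)$ for all $x$; averaging with the weights $1/k$ (a Toeplitz argument, since $\sum_{k\le N}1/k\sim\log N$) gives (\ref{2}) with the genuine c.d.f.\ $M$.

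For the first hypothesis I would appeal to the perturbed transfer operator analysis that already underlies Corollary \ref{Gibbs-Markov}. For a mixing Gibbs--Markov map with $\phi$ Lipschitz on $\alpha$ and distribution in the domain of attraction of a stable law of order $1<d\le 2$, the conditional local limit theorem at $0$ is proved in \cite{Jon3,Jon4} by the Nagaev--Guivarc'h method: the Fourier-twisted operators act on the space of functions Lipschitz on the partition with a uniform spectral gap for frequencies bounded away from $0$, and depend analytically on the frequency near $0$ with leading eigenvalue tending to $1$. This machinery automatically produces the statement in $L^\infty$ (indeed in Lipschitz norm) and with a geometric error, since the non-leading spectrum contributes geometrically and the leading part is governed by the analytic expansion of the top eigenvalue. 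Hence Definition \ref{cllt-exp} holds with $g_n\to g(0)$ at an exponential rate.

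For (\ref{eq:cond1}) the main point is a measurability observation combined with exponential mixing. Since $\ell_k=\sum_{i=1}^k\mathbb 1_{\{S_i=0\}}$ depends only on $X_1,\dots,X_k$, the bounded function $g(\ell_k/a_k)$ is measurable with respect to $\bigvee_{j=0}^{k-1}T^{-j}\alpha$, while the observable in (\ref{eq:cond1}) depends only on the coordinates after time $2k$. Writing the covariance as $\int P_T^{2k}\big(g(\ell_k/a_k)\big)\,F\,dP-\int g(\ell_k/a_k)\,dP\int F\,dP$, I would use that $P_T^{k}$ smooths any bounded $\bigvee_{j=0}^{k-1}T^{-j}\alpha$-measurable function into the space of functions Lipschitz on $\alpha$, with norm controlled by the sup-norm (Gibbs--Markov bounded distortion), and then apply the spectral gap of $P_T$ to the remaining $k$ iterates. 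This yields $|\mathrm{cov}(g(\ell_k),F\circ T^{2k})|\le C\|g\|_\infty\|F\|_\infty\,\rho^{\,k}$ with $\rho\in(0,1)$, which for all large $k$ is far smaller than $(\log\log k)^{-1-\delta}$; so the Gibbs--Markov rate of mixing makes this hypothesis hold comfortably.

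The remaining hypothesis (\ref{eq:cond2}) is where I expect the real work, because it is a quantitative statement about the one-dimensional probabilities $P(S_n=x)$ that goes strictly beyond the pointwise local limit theorem at $0$. The plan is to use the full (Stone form) conditional local limit theorem (\ref{full}) for Gibbs--Markov maps \emph{together with} a summable-in-$n$ error estimate coming from the same spectral bounds, to write $P(S_n=x)=B_n^{-1}g(x/B_n)+r_n(x)$ with $\sum_n\sup_x|r_n(x)|<\infty$. Then $\sum_n|P(S_n=x)-P(S_n=0)|\le\sum_n B_n^{-1}|g(x/B_n)-g(0)|+O(1)$, and one estimates the main sum by splitting it at $n\asymp|x|^{1/\beta}$ and using that the stable density $g$ is Lipschitz near $0$ and has a power tail $g(u)=O(|u|^{-1-d})$ at infinity, together with Karamata's theorem for $B_n=n^\beta L(n)$ with $L\to c$; since $\beta=1-\alpha=1/d$ this comes out to $O(1+|x|^{(1-\beta)/\beta})=O(1+|x|^{\alpha/(1-\alpha)})$, which is (\ref{eq:cond2}). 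The genuine obstacle is making the local limit theorem quantitative in exactly this sense --- uniform in $x$ and summable in $n$ --- rather than merely pointwise; once that is in hand the rest is bookkeeping. With all three hypotheses and (\ref{2}) established, Theorem \ref{ass3} delivers (\ref{1}), which establishes Corollary \ref{GM}.
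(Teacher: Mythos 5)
Your overall strategy coincides with the paper's: apply Theorem \ref{ass3} after checking its three hypotheses for Gibbs--Markov maps via the spectral theory of the twisted transfer operators $P_t$, then pass from (\ref{2}) to (\ref{1}). Your treatment of (\ref{eq:cond1}) is in fact slightly more careful than the paper's Section~\ref{subsection3} argument: you explicitly note that $g(\ell_k/a_k)$ is only $\bigvee_{j<k}T^{-j}\alpha$-measurable, so its Lipschitz norm on $\Omega$ can be exponentially large, and you repair this by first applying $P_T^{k}$ (bounded distortion) to land in $\mathcal L$ before invoking the spectral gap on the remaining $k$ iterates. The paper writes $\|N^{2k}(\hat g)\|\le C\theta_1^{2k}\|g\|$ directly, leaving this regularization step implicit.

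Where you diverge from the paper is (\ref{eq:cond2}), and you have correctly identified that this is the real content. You propose to first establish a uniform-in-$x$, summable-in-$n$ Stone-type local limit theorem $P(S_n=x)=B_n^{-1}g(x/B_n)+r_n(x)$ with $\sum_n\sup_x|r_n(x)|<\infty$, and then do Karamata bookkeeping. That would work, but proving such a quantitative uniform LLT is itself nontrivial and is not what the paper does. The paper instead computes $\sum_n E(\mathbb 1_{\{S_n=x\}}-\mathbb 1_{\{S_n=0\}})$ directly by Fourier inversion, $E(\mathbb 1_{\{S_n=x\}}-\mathbb 1_{\{S_n=0\}})=\int_{[-\pi,\pi]}(e^{-itx}-1)E[P_t^n\mathbb 1]\,dt$, and then splits $P_t^n=\lambda_t^n\pi_t+N_t^n$ on $C_\delta$ and uses $\|P_t^n\|\le\theta_2^n$ off $C_\delta$. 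The $N_t^n$ and off-$C_\delta$ contributions sum to $O(1)$ uniformly in $x$, and the leading term is estimated by summing the geometric series $\sum_j|\lambda_t|^j\lesssim|t|^{-d}$ and then bounding $\int_{C_\delta}|t|^{-d}|1-\cos(tx)|\,dt$ and $\int_{C_\delta}|t|^{-d}|\sin(tx)|\,dt$ by $O(|x|^{d-1})=O(|x|^{\alpha/(1-\alpha)})$. This route delivers (\ref{eq:cond2}) from the same spectral inputs you would need for the uniform LLT, but without ever formulating a uniform local limit theorem and without any error term you must control uniformly in $x$ \emph{and} summably in $n$ at once. If you want to complete your version of the argument, the cleanest fix is to replace the ``quantitative Stone LLT'' step by exactly this Fourier-inversion computation, which resolves the obstacle you flagged.
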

It is because  the conditional local limit theorem in the sense of Definition \ref{cllt-exp} holds and the assumptions on the transfer operator in Section \ref{subsection3} are satisfied.

\begin{example}[$\beta$ transformation]
The stationary process $\{X_n\}$ defined by the $\beta$ transformation  in Example \ref{beta}  has the almost sure central limit theorem \ref{ass3}.
\end{example}

This paper is structured as follows. Section \ref{section2} is devoted to proving the limiting distribution of the local time  of $X_n$. In Section \ref{section3}, the almost sure central limit theorem of the local time is proved.

\section{Asymptotic Distribution of the Local Times $\ell_n$}\label{section2}

Since $(X_n)_{n\in \mathbb N}$ is stationary and integer-valued we may represend the process on the probability space $\Omega=\mathbb Z^{\mathbb N}$ equipped with the $\sigma$-algebra $\mathcal F$ generated by all coordinate projections $\phi_n:\Omega\to \mathbb Z$ and the probability measure $P$ so that the joint distribution of the $\phi_n$ agrees with that of the process $(X_n)$. If $T:\Omega\to \Omega$ denotes the shift transformation, then $\phi_n=\phi_1\circ T^{n-1}$ and hence the process $X_n$  can be written in the form $X_n=\phi\circ T^{n-1}$ where we use $\phi=\phi_1$.
Next  we extend the probability space $(\Omega,\mathcal{F},P)$ to a product space. Define $\tilde{T}: \Omega \times \mathbb{Z} \to \Omega \times \mathbb{Z}$ by $\tilde{T}(\omega, n)=(T \omega, n+\phi(\omega))$,
then by induction, $\tilde{T}^k(\omega, n)=(T^k \omega, n+S_k(\omega))$. Let
$m_{\mathbb{Z}}$ be the counting measure on the space  $\mathbb{Z}$, and $\mathcal{Z}$ be  the Borel-$\sigma$ algebra of $\mathbb{Z}$. A new dynamical system $(X, \mathcal{B}, \mu,\tilde{T} )$ then can be defined, where $X=\Omega \times \mathbb{Z}$, $\mathcal{B}=\mathcal{F}\otimes \mathcal{Z}$ and $\mu=P\otimes m_{\mathbb{Z}}$ is the product measure. We denote by $P_T$ and $P_{\tilde T}$ the  transfer operators of $T$ and $\tilde T$, respectively.

\begin{lemma}\label{point}
Suppose  $\{X_n\}$ has the conditional local limit theorem at $0$  (cf. (\ref{clltdef})). Then
\begin{enumerate}
\item $\tilde{T}$ is conservative and measure preserving in $(X,\mathcal{B},\mu)$.
\item There exists a probability space $(Y, \mathcal{C}, \lambda)$, and a collection of measures 
$\{\mu_{y}:y \in Y\}$ on $(X,\mathcal{B})$ such that 
\begin{enumerate}
\item For $y \in Y$, $\tilde{T}$ is a conservative ergodic measure-preserving transformation of $(X, \mathcal{B}, \mu_y)$. 
\item For $A\in \mathcal{B}$, the map $y\to \mu_{y}(A)$ is measurable and 
$$\mu(A)=\int_{Y} \mu_{y}(A)d\lambda(y).$$
\end{enumerate}
\item $\lambda$-almost surely for $y$, $(X, \mathcal{B}, \mu_y, \tilde{T})$  is pointwise dual ergodic.
\end{enumerate}
\end{lemma}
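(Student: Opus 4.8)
The plan is to exploit that $\tilde T$ is the $\mathbb Z$-extension of $(\Omega,\mathcal F,P,T)$ by the cocycle $(S_n)$: first I would convert the conditional local limit theorem into an exact identity for the iterates of the transfer operator $P_{\tilde T}$ on the section $W:=\Omega\times\{0\}$, and then read off conservativity, the ergodic decomposition, and pointwise dual ergodicity in turn. That $\tilde T$ preserves $\mu=P\otimes m_{\mathbb Z}$ is immediate from $\tilde T(\omega,n)=(T\omega,n+\phi(\omega))$, the $T$-invariance of $P$ and the translation invariance of the counting measure. The starting point for everything else is the identity
\[
P_{\tilde T}^{k}(\mathbb 1_{W})(\omega,j)=P_{T}^{k}(\mathbb 1_{\{S_k=j\}})(\omega),\qquad j\in\mathbb Z,\ k\ge 1,
\]
which one checks by pairing $P_{\tilde T}^{k}\mathbb 1_W$ with test functions $\mathbb 1_{A\times\{j\}}$, using $\tilde T^{k}(\omega,0)=(T^{k}\omega,S_k(\omega))$ and the defining relation of the transfer operator. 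Combined with the transfer-operator reformulation of the conditional local limit theorem recalled above, namely $B_k P_T^{k}(\mathbb 1_{\{S_k=x\}})\to g(0)>0$ $P$-a.s.\ for every $x\in\mathbb Z$, and with $\sum_k B_k^{-1}=\infty$ (which holds since $\beta<1$), this identity gives
\[
B_k\,P_{\tilde T}^{k}(\mathbb 1_{W})\longrightarrow g(0)\quad\mu\text{-a.e. on }X,\qquad\text{hence}\qquad \sum_{k\ge 1}P_{\tilde T}^{k}(\mathbb 1_{W})=+\infty\quad\mu\text{-a.e.},
\]
the exceptional $\mu$-null set being the countable union over $j\in\mathbb Z$ of the exceptional sets coming from the local limit theorem --- this is where one uses that Definition \ref{clltdef} is stated for all $x\in\mathbb Z$ and not only for $x=0$.

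To finish part (1) it remains to prove conservativity. If the dissipative part of $\tilde T$ were non-trivial it would contain a wandering set $W_0$ with $\mu(W_0)>0$; then $\sum_{k\ge0}\mathbb 1_{W_0}\circ\tilde T^{k}\le 1$ $\mu$-a.e., so by Tonelli
\[
\int_{W_0}\sum_{k\ge 1}P_{\tilde T}^{k}(\mathbb 1_{W})\,d\mu=\sum_{k\ge 1}\mu(W\cap\tilde T^{-k}W_0)=\int_{W}\sum_{k\ge 1}\mathbb 1_{W_0}\circ\tilde T^{k}\,d\mu\le\mu(W)=1<\infty,
\]
contradicting $\sum_{k\ge1}P_{\tilde T}^{k}\mathbb 1_W=+\infty$ $\mu$-a.e.\ on $W_0$. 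Hence the dissipative part is $\mu$-null and $\tilde T$ is conservative.

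Part (2) is the ergodic decomposition theorem for a conservative measure preserving transformation of a $\sigma$-finite standard Borel space (see \cite{Jon1}): disintegrating $\mu$ over the $\sigma$-algebra of $\tilde T$-invariant sets yields a probability space $(Y,\mathcal C,\lambda)$ and measures $\mu_y$ for which $\tilde T$ is conservative, ergodic and $\mu_y$-preserving for $\lambda$-a.e.\ $y$, with $y\mapsto\mu_y(A)$ measurable and $\mu(A)=\int_Y\mu_y(A)\,d\lambda(y)$. Since the fibre measures are canonical only up to a positive scalar, I would fix the normalization $\mu_y(W)=1$ for $\lambda$-a.e.\ $y$; this is consistent with $\mu(W)=P(\Omega)=1$ and makes $\lambda$ a probability measure.

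For part (3), fix $y$ outside a suitable $\lambda$-null set. On its ergodic component the transfer operator of $\tilde T$ computed with respect to $\mu$ agrees $\mu_y$-a.e.\ with that computed with respect to $\mu_y$ (a standard feature of the decomposition over the invariant $\sigma$-algebra), so the convergence $B_k P_{\tilde T}^{k}\mathbb 1_W\to g(0)$ persists $\mu_y$-a.e.; with $a_n=g(0)\sum_{k\le n}B_k^{-1}$ this yields
\[
\frac{1}{a_n}\sum_{k=1}^{n}P_{\tilde T}^{k}(\mathbb 1_{W})\longrightarrow 1=\mu_y(W)\qquad\mu_y\text{-a.e. on }X .
\]
Because $(X,\mathcal B,\mu_y,\tilde T)$ is conservative and ergodic, Hopf's ratio ergodic theorem gives, for $f\in L^1(\mu_y)$, that $(\sum_{k\le n}P_{\tilde T}^{k}f)/(\sum_{k\le n}P_{\tilde T}^{k}\mathbb 1_W)\to\int_X f\,d\mu_y/\mu_y(W)=\int_X f\,d\mu_y$ $\mu_y$-a.e.; multiplying by the displayed limit upgrades it to $\frac{1}{a_n}\sum_{k=1}^{n}P_{\tilde T}^{k}f\to\int_X f\,d\mu_y$ $\mu_y$-a.e.\ for every $f\in L^1(\mu_y)$, i.e.\ $(X,\mathcal B,\mu_y,\tilde T)$ is pointwise dual ergodic with return sequence $(a_n)$. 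I expect the bulk of the real work to be the measure-theoretic bookkeeping in parts (2)--(3): checking that the transfer operator, the exceptional null sets, and the normalization $\mu_y(W)=1$ all pass consistently to $\lambda$-a.e.\ ergodic component; once the transfer-operator identity of the first paragraph is in hand, conservativity and the pointwise-dual-ergodicity endgame are short.
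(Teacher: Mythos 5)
Your proof is correct and follows essentially the same route as the paper: the operator identity $P_{\tilde T}^{k}(\mathbb 1_W)=P_T^{k}(\mathbb 1_{\{S_k=\cdot\}})$, the CLLT at $0$ to force $\sum_k P_{\tilde T}^k\mathbb 1_W=\infty$ $\mu$-a.e., ergodic decomposition over the invariant $\sigma$-algebra, and the dual ratio ergodic theorem on each ergodic fibre. The only cosmetic differences are that you establish conservativity by a self-contained wandering-set computation where the paper simply cites Hopf's criterion (Aaronson, Prop.\ 1.3.1); you normalize $\mu_y(\Omega\times\{0\})=1$ rather than carrying the constant $C(y)=1/\mu_y(\Omega\times\{0\})$ as the paper does (both valid, since $\mu_y(W)>0$ for $\lambda$-a.e.\ $y$ follows from your divergence statement); and what you call Hopf's ratio ergodic theorem in the transfer-operator form is what the paper calls Hurewicz's ergodic theorem.
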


\begin{proof}
1. For any  $m\in \mathbb{Z}$, let $ f: \Omega \times \mathbb{Z}\to \mathbb{R} $ be defined as $f(\omega, n)=h(\omega)\otimes  \mathbb 1_{\{m\}}(n)$.

It can be proved that $\mu$ almost surely for $(\omega, k)\in X$,
\begin{equation}\label{TTildeT}
P_{\tilde T}^n  (h\otimes \mathbb 1_{\{m\}})(\omega, k)=P_{T}^n\Bigg(h(\cdot)  \mathbb 1_{\{m\}}\bigg(k-S_n(\cdot)\bigg)\Bigg)(\omega).
\end{equation}
Set $h\equiv \mathbb 1$, with the assumption of the local conditional limit theorem (\ref{clltdef}), 
\begin{eqnarray*}
\sum_{n=0}^{N}P_{{T}}^n \mathbb 1_{\{m\}}\bigg(k-S_n(\cdot)\bigg)(\omega)&=&\sum_{n=0}^{N} P(S_n=k-m|T^n()=\omega)\\
&\sim& \sum_{n=0}^{N}\dfrac{1}{B_n}g(0)=:a_N \text{ $P -a.s.$ for $\omega$}.
\end{eqnarray*}
Since $ \sum_{n=0}^{\infty}\dfrac{1}{B_n}=\infty$, it follows that 
\begin{eqnarray*}
\sum_{n=1}^{\infty}P_{\tilde{T}}^n  (\mathbb 1 \otimes  \mathbb 1_{\{m\}})=\infty \;\mu-a.s.
\end{eqnarray*}
By linearity of $P_{T}^n$, for $f(\omega, x)=\sum_{m\in\mathbb{Z}} k_m \mathbb 1_{\{m\}}(x)$ with $k_m>0$ with $\sum_{m\in\mathbb{Z}} k_m <\infty$, one has $0<f\in L^1(\mu)$  and 
\begin{eqnarray*}
\sum_{n=1}^{\infty}P_{\tilde{T}}^n  f=\infty, \;\mu-a.s..
\end{eqnarray*}
Proposition 1.3.1 in \cite{Jon1} states that 
\begin{eqnarray*}
\{(\omega, x): \sum_{n=1}^{\infty} P_{\tilde{T}}^nf=\infty \}=\mathcal{C} \text{ mod $\mu$ for any $f\in L^1(\mu), f>0$}
\end{eqnarray*}
where $\mathcal{C}$ is the conservative part of $\tilde{T}$.
Hence $\mathcal{C}=X$ mod $\mu$, which means that $\tilde{T}$ is conservative.

 2. 
The proof of the ergodicity decomposition is an adaption of the corresponding argument of Section 2.2.9 of \cite{Jon}(page 63).

3. Since 
\begin{eqnarray*}
\sum_{n=1}^{N}P_{\tilde{T}}^n  (\mathbb 1_{\Omega}\otimes \mathbb 1_{\{m\}})\sim a_N \;\mu-a.s.,
\end{eqnarray*}
one also has that relation  
$\mu_y$-a.s..
From 2, it is known that $\tilde{T}$ is conservative and ergodic on $(X,\mathcal{B}, \mu_y)$,
by  Hurewicz's ergodic theorem, one has $\forall f\in L^1(\mu_y)$, almost surely,
\begin{equation}
\dfrac{1}{a_n}\sum_{k=0}^n P_{\tilde{T}}^kf\sim \dfrac{\sum_{k=0}^n P_{\tilde{T}}^kf}{\sum_{k=0}^n P_{\tilde{T}}^k\bigg( \mathbb 1_{\Omega} \otimes \mathbb 1_{\{m\}} \bigg)}\to \dfrac{\int_{\Omega\times \mathbb{Z}}fd\mu_y}{\int_{\Omega\times \mathbb{Z}} \mathbb 1_{\Omega}\otimes \mathbb 1_{\{m\}} d\mu_y}.
\end{equation}

Since $a_N$ doesn't depend on $m$,  $\frac{1}{\int_{\Omega\times \mathbb{Z}} 1\otimes \mathbb 1_{\{m\}}d\mu_y}$ can be written as $C(y)$.
Hence, $(X, \mathcal{B}, \mu_y, \tilde{T})$  is pointwise dual ergodic with return sequence $a_n C(y)= C(y)g(0)\sum_{i=1}^n\dfrac{1}{B_i}$.\QEDA
\end{proof}

Next we prove the limiting distribution of the scaled local time $\frac{\ell_n}{a_n}$.   
\begin{proof} of the Theorem \ref{main}

Since $B_n=n^{\beta}L(n)$ is regularly varying of order $\beta$, by Karamata's integral theorem (c.f. e.g.\cite{Whitt} Theorem $A.9.$), $\{a_n\}_{n=1}^{\infty}$ is regularly varying of order $\alpha=1-\beta \in (0,\frac{1}{2}]$ and $a_n\sim n^{\alpha}\dfrac{C}{L(n)}$.

Let $A=\Omega \times \{0\}$,  and  define $S^{\tilde{T}}_n(f)(\omega,m):=\sum_{i=1}^n f\circ \tilde{T}^{i}(\omega,m).$ Since $\tilde{T}^n(\omega, k)=(T^n\omega, k+S_n(\omega))$, the local time of $\{S_n\}$ has the following representation: \\
\begin{equation}
\ell_n(\omega)=\sum_{i=1}^n \mathbb 1_{\{S_i(\omega)=0\}}=\sum_{i=1}^n \mathbb 1_{\{ A\}}(\tilde{T}^i(\omega, 0) )=S_n^{\tilde{T}}(\mathbb 1_{\{ A\}})(\omega, 0).
\end{equation}

From Lemma \ref{point}, $(X, \mathcal{B}, \mu_y,\tilde{T})$ is pointwise dual-ergodic.  Since $a_n$ is regularly varying, by applying  Theorem  1 in \cite{Jon}, for any $f\in L^1(\mu_y), f\geq 0,$ one has  strong convergence,  denoted by
\begin{equation}
\dfrac{S^{\tilde{T}}_n(f)}{a_n} \stackrel{\mathfrak{L}}{\longrightarrow} {C(y)}\mu_y(f)Y_{\alpha},
\end{equation}
which means
\begin{equation} 
\int_{X} g\bigg(\dfrac{S^{\tilde{T}}_n(f)(x,\omega)}{a_n}\bigg)h_y(x,\omega)d\mu_y(x,\omega)\to E[g({C(y)}\mu_y(f)Y_{\alpha})],
\end{equation}
for any bounded and continuous function $g$ and for any probability density function $h_y$ of $(X, \mathcal{B}, \mu_y)$. Here $Y_{\alpha}$ has the normalized  Mittag-Leffler distribution of order $\alpha=1-\beta$.

Let probability density function $H(\omega,m)$ of $(X, \mathcal{B}, \mu)$ be defined as 
$$
H(\omega,m)=
\begin{cases}
H(\omega), & m=0,\\
0, & m\neq 0,
\end{cases}
$$ where $H(\omega)$ is an arbitrary probability density function in $\Omega$. 
For each $y$, define 
 \begin{eqnarray}
 h_y(\omega,x)=
 \begin{cases}
 \frac{1}{\int_X H(\omega,x)d\mu_y} H(\omega,x), &\int_X H(\omega,x)d\mu_y\neq 0;\\
 0, &\int_X H(\omega,x)d\mu_y= 0.
 \end{cases}
 \end{eqnarray}
Then  $ h_y(\omega,x)$ is a probability density function on $(X,\mathcal{B}, \mu_y)$ for $y\in U$ where $U=\{y\in Y: \int_X H(\omega,x)d\mu_y\neq 0\}$.  
 
By the Disintegration Theorem (cf., eg.,\cite{Jon1}), 
\begin{eqnarray*}
&&\int_{X} g\bigg(\dfrac{S^{\tilde{T}}_n(f)(\omega,x)}{a_n}\bigg)H(\omega,x)d\mu(\omega, x)\\
&=& \int_U (\int_X H(\omega,x)d\mu_y)\int_X g\bigg(\dfrac{S^{\tilde{T}}_n(f)(\omega,x)}{a_n}\bigg)h_y(\omega,x)d\mu_y(\omega, x) d\lambda(y)\\
&=&\int_Y (\int_X H(\omega,x)d\mu_y)\int_X g\bigg(\dfrac{S^{\tilde{T}}_n(f)(\omega,x)}{a_n}\bigg)h_y(\omega,x)d\mu_y(\omega, x) d\lambda(y)\\
&\to &\int_Y  \mu_y(H)  E[g(C(y)\mu_y(f) Y_{\alpha})]d\lambda(y),\qquad \text{by the dominated convergence theorem.}
\end{eqnarray*}
Let $f=\mathbb 1_{\Omega}\times \mathbb 1_{\{m\}}$, then $C(y)\mu_y(f)=1$.
The result above becomes 
$$
\int_{\Omega}g\bigg(\dfrac{\ell_n}{a_n}\bigg)H(\omega)dP(\omega)\to  E[g(Y_{\alpha})]
$$
for any bounded and continuous function $g$ and any probability density function $H$ of $(\Omega, \mathcal{F}, P).$ \QEDA
\end{proof}

\section{Proof of Almost Sure Weak Convergence Theorem}\label{section3}

\subsection{Proof of Theorem \ref{ass3}}
In this section, we shall show that the local time $\ell_n$ of $\{X_n\}$  has an almost sure weak convergence  theorem  under the assumptions of Theorem \ref{ass3}. The following proposition will be used in the proof of  Theorem \ref{ass3}, so we state it below and the proof of it is in Section \ref{subsection2}.
\begin{proposition}\label{prop1}
\begin{equation*}\label{clm1}
\Var\bigg(\dfrac{1}{\log N}\sum_{k=1}^N\dfrac{1}{k}g(\dfrac{\ell_k}{a_k})\bigg) =O\bigg((\log\log N)^{( -1-\delta)}\bigg) 
\end{equation*}
for some $\delta>0,$ as $N\to \infty$, where $g$ is  any bounded Lipschitz  function with Lipschitz constant $1$.
\end{proposition}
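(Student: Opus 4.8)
The plan is the classical second-moment computation behind almost sure central limit theorems, carried out for local times. Writing $Z_N:=\frac1{\log N}\sum_{k=1}^N\frac1k g(\ell_k/a_k)$, I would first expand
\begin{equation*}
\Var(Z_N)=\frac1{(\log N)^2}\sum_{k=1}^N\sum_{l=1}^N\frac1{kl}\Cov\!\left(g(\ell_k/a_k),g(\ell_l/a_l)\right),
\end{equation*}
so that, by symmetry and because the diagonal contributes only $O((\log N)^{-2})$ (as $\|g\|_\infty<\infty$), it suffices to bound $\Cov(g(\ell_k/a_k),g(\ell_l/a_l))$ for $k\le l$. Fix $\delta':=2(1+\delta)$, put $\rho_k:=k(\log\log k)^{\delta'/\alpha}$, and split the double sum into the near range $k\le l\le\rho_k$ and the far range $l>\rho_k$ (a bounded number of small $k$ being absorbed into an $O(1/\log N)$ term). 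On the near range I would use only $|\Cov(\cdot,\cdot)|\le\|g\|_\infty^2$; since $\sum_{k\le l\le\rho_k}\frac1l=O(\log\log\log k)$, this range contributes $O\!\left(\frac{\log\log\log N}{\log N}\right)=o\!\left((\log\log N)^{-1-\delta}\right)$, which is harmless.

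The heart of the matter is the far-range estimate $\bigl|\Cov(g(\ell_k/a_k),g(\ell_l/a_l))\bigr|=O((\log\log k)^{-1-\delta})$ for $l>\rho_k$, which I would prove in three reductions, using that $a_n$ is regularly varying of index $\alpha=1-\beta$ (Theorem \ref{main}) and $L(n)\to c>0$, so that $a_l/a_k\gtrsim(l/k)^{\alpha}\ge(\log\log k)^{\delta'}$ for $l>\rho_k$ and $k$ large. \textbf{(a)} Decompose $\ell_l=\ell_{2k}+(\ell_l-\ell_{2k})$ and replace $g(\ell_l/a_l)$ by $g((\ell_l-\ell_{2k})/a_l)$; since $g$ has Lipschitz constant $1$, this changes the covariance by at most $2\|g\|_\infty E[\ell_{2k}]/a_l$, and $E[\ell_{2k}]=\sum_{i\le2k}P(S_i=0)\sim a_{2k}$ (because $B_nP(S_n=0)\to g(0)$ by Definition \ref{cllt-exp}), so the error is $O(a_k/a_l)=O((\log\log k)^{-1-\delta})$. \textbf{(b)} Write $\ell_l-\ell_{2k}=\#\{2k<i\le l:S_i=0\}=\#\{1\le j\le l-2k:S_j\circ T^{2k}=-S_{2k}\}$, the local time at the random level $-S_{2k}$ of the $2k$-shifted process, and replace that level by $0$: replace $g((\ell_l-\ell_{2k})/a_l)$ by $g((W_0\circ T^{2k})/a_l)$ with $W_0:=\sum_{j=1}^{l-2k}\mathbb 1_{\{S_j=0\}}$. \textbf{(c)} What remains is $\Cov(g(\ell_k/a_k),F\circ T^{2k})$ with $F=g(W_0/a_l)$ a bounded function of $X_1,\dots,X_{l-2k}$, hence Lipschitz for the symbolic metric, and this is $\le(\log\log k)^{-1-\delta}$ by hypothesis \eqref{eq:cond1}.

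Step (b) is where \eqref{eq:cond2} enters and is the crux. The change of the covariance in (b) is at most $\frac{2\|g\|_\infty}{a_l}\,E\bigl|\sum_{j=1}^{l-2k}(\mathbb 1_{\{S_j\circ T^{2k}=-S_{2k}\}}-\mathbb 1_{\{S_j\circ T^{2k}=0\}})\bigr|$, and this cannot be estimated by bounding the two local times separately, since each is of order $a_l$. Instead I would condition on $\mathcal F_{2k}=\sigma(X_1,\dots,X_{2k})$, freeze the level at $s$, and bound the conditional first absolute moment of $\sum_{j\le l-2k}(\mathbb 1_{\{S_j=-s\}}-\mathbb 1_{\{S_j=0\}})$ by the square root of its conditional second moment: its mean is $O(1+|s|^{\alpha/(1-\alpha)})$ by \eqref{eq:cond2}, while on expanding the square the diagonal is $\le\sum_j(P(S_j=-s)+P(S_j=0))=O(a_l)$ and the off-diagonal terms, after the renewal/factorization estimates available for the systems in question, are dominated by $\bigl(\sum_jP(S_j=0)\bigr)\bigl(\sum_j|P(S_j=-s)-P(S_j=0)|\bigr)=O\!\left(a_l(1+|s|^{\alpha/(1-\alpha)})\right)$, again by \eqref{eq:cond2}. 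Thus the conditional second moment is $O\!\left(a_l(1+|s|^{\alpha/(1-\alpha)})+(1+|s|^{\alpha/(1-\alpha)})^2\right)$, and averaging over $s=S_{2k}$ the error in (b) is of order $\frac1{a_l}\bigl(\sqrt{a_l}\,(1+E|S_{2k}|^{\alpha/(1-\alpha)})^{1/2}+(1+E|S_{2k}|^{\alpha/(1-\alpha)})\bigr)$. Since $S_{2k}$ lies in the domain of attraction of $Z_d$ with $d=1/\beta>\alpha/(1-\alpha)$, one has $E|S_{2k}|^{\alpha/(1-\alpha)}=O(B_{2k}^{\alpha/(1-\alpha)})=O(a_k)$, so the error is $O(\sqrt{a_k/a_l}+a_k/a_l)=O((\log\log k)^{-\delta'/2})=O((\log\log k)^{-1-\delta})$ by the choice of $\delta'$. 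Combining (a)--(c) yields the far-range bound.

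Finally I would sum it:
\begin{equation*}
\frac1{(\log N)^2}\sum_{k=1}^N\sum_{l>\rho_k}\frac1{kl}(\log\log k)^{-1-\delta}\le\frac1{(\log N)^2}\Big(\sum_{l=1}^N\frac1l\Big)\Big(\sum_{k=1}^N\frac{(\log\log k)^{-1-\delta}}{k}\Big),
\end{equation*}
and since $\sum_{k=1}^N(\log\log k)^{-1-\delta}/k\sim\int_1^{\log N}(\log u)^{-1-\delta}\,du\sim\log N\,(\log\log N)^{-1-\delta}$, the right-hand side is $O((\log\log N)^{-1-\delta})$; passing to a slightly smaller $\delta$ absorbs the constants. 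The main obstacle is making the approximate independence used in step (b) rigorous: both conditioning the increment local time on $\mathcal F_{2k}$ and factorizing the two-point probabilities $P(S_i=a,S_j=b)$ are exact only in the independent or Markov case, and in the stationary/dynamical setting they must be replaced by quantitative versions furnished by the transfer-operator hypotheses of Section \ref{subsection3} (the same machinery underlying \eqref{eq:cond1} and Definition \ref{cllt-exp}); keeping the errors from these approximations of lower order than $(\log\log k)^{-1-\delta}$ is the delicate bookkeeping.
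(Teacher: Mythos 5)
Your overall strategy is the same as the paper's: write the variance as a double sum over covariances, dispose of the diagonal and near-off-diagonal terms trivially, and for well-separated indices split the covariance into a piece controlled by hypothesis \eqref{eq:cond1} (covariance of $g(\ell_k/a_k)$ with a function measurable in $\sigma(X_{2k+1},\dots)$) plus a moment-error term coming from replacing $\ell_l$ by the local time of the $2k$-shifted process. Two points are worth making.

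First, a small but genuine structural difference in how the near/far split is managed. You cut at $\rho_k=k(\log\log k)^{\delta'/\alpha}$ so that on the far range the moment-error term is \emph{pointwise} $O((\log\log k)^{-1-\delta})$, and dispose of the wider near band with the trivial bound (giving $O(\log\log\log N/\log N)$, which is indeed $o((\log\log N)^{-1-\delta})$). The paper instead cuts at $j=2k$ and sums the moment-error directly: after bounding $E\left|\ell_j/a_j-f_{(2k,j)}\right|$ by Lemma \ref{lm2} and Jensen, the error term in $T_3$ yields $\sum_{j}\frac{1}{j^{1+\alpha/2}}\sum_{k<j}\frac{1}{k^{1-\alpha/2}}=O(\log N)$, which after division by $(\log N)^2$ is $O((\log N)^{-1})$. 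Both bookkeeping schemes work; yours has the advantage that the far-range covariance bound is of a single, clean form $(\log\log k)^{-1-\delta}$, while the paper's keeps the polynomial and iterated-logarithmic errors in separate sums $T_{31}$ and $T_{32}$.

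Second, and more importantly, your step (b) is precisely the content of the paper's Lemma \ref{lm2}, and you are right that this is the crux, but your proposal does not actually prove it. Your heuristic ``diagonal $O(a_l)$ plus off-diagonal factorized as $(\sum_j P(S_j=0))(\sum_j|P(S_j=-s)-P(S_j=0)|)$'' would be fine for independent increments, but in the stationary/dynamical setting the two-point probabilities $P(S_{j_1}=a,\,S_{j_2}=b)$ do not factorize. The paper handles this by writing the relevant expectation through the transfer operator,
$$E\left(\mathbb 1_{\{S_{j_1}=z_1\}}\big(\mathbb 1_{\{S_{j_2}=x\}}-\mathbb 1_{\{S_{j_2}=0\}}\big)\mathbb 1_{\{S_{2k}=x\}}\right)=E\left(P_{T^{j_1}}\big(\mathbb 1_{\{S_{j_1-2k}\circ T^{2k}=z_1-x\}}\mathbb 1_{\{S_{2k}=x\}}\big)\,\big(\mathbb 1_{\{S_{j_2-j_1}=x-z_1\}}-\mathbb 1_{\{S_{j_2-j_1}=-z_1\}}\big)\right),$$
using the $L^\infty$ conditional LLT of Definition \ref{cllt-exp} (applied twice, at lags $2k$ and $j_1-2k$) to approximate the transfer-operator factor by a constant $g_{j_1-2k}g_{2k}/(B_{j_1-2k}B_{2k})$ plus an exponentially small $L^\infty$ error, and then invoking \eqref{eq:cond2} to sum the resulting $|E(\mathbb 1_{\{S_{j_2-j_1}=x-z_1\}}-\mathbb 1_{\{S_{j_2-j_1}=-z_1\}})|$ over $j_2$. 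This is exactly the quantitative ``approximate independence'' you flag as the missing ingredient. So your proposal is a correct roadmap, but as written it has a hole precisely where the paper inserts Lemma \ref{lm2}; to close it you would need to reproduce that transfer-operator calculation (or cite the lemma).

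One minor caution: the assertion $E|S_{2k}|^{\alpha/(1-\alpha)}=O(B_{2k}^{\alpha/(1-\alpha)})$, which you and the paper both use, relies on convergence of the $\alpha/(1-\alpha)$-th absolute moments of $S_n/B_n$, not just weak convergence to $Z_d$; this requires a uniform-integrability argument (true here since $\alpha/(1-\alpha)<d$, but it should be stated).
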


Once Proposition \ref{prop1} is granted, the proof of Theorem \ref{ass3} can be proved using standard arguments. We sketch these shortly.
\begin{proof}[Proof of Theorem \ref{ass3}]
By the dominated convergence theorem, statement (\ref{1}) implies (\ref{2}) when  taking expectation. To prove the other direction, 
it is sufficient to prove that (see e.g. Lacey and Philipp, 1990 \cite{Lacey})
\begin{equation}\label{eqn}
\lim_{N\to\infty} \dfrac{1}{\log N}\sum_{k=1}^N \dfrac{1}{k} \xi_k=0, \;a.s.
\end{equation}
for any bounded Lipschitz  continuous  function $g$ with Lipschitz constant $1$, 
where $\xi_k:=g\left(\frac{\ell_k}{a_k}\right)-E\left[g\left(\frac{\ell_k}{a_k}\right)\right]$.

Taking $N_i=\exp \exp i^{\epsilon}$, for any $\epsilon >\dfrac{1}{1+\delta}$, then Proposition \ref{prop1} implies
\begin{equation} \label{eqn1}
\sum_{i=1}^{\infty}\dfrac{1}{\log ^2 N_i}E(\sum_{k=1}^{N_i}\dfrac{1}{k}\xi_k)^2<\infty.
\end{equation}
By Borel-Cantelli lemma, 
\begin{equation}
\lim_{i\to\infty} \dfrac{1}{\log N_i}\sum_{k=1}^{N_i} \dfrac{1}{k} \xi_k=0, \;a.s.
\end{equation}
 For any $N$, there exists $k$ such that $N_k\leq N<N_{k+1}$ and we have 
\begin{eqnarray*}
\dfrac{1}{\log N}|\sum_{j=1}^N\dfrac{1}{j}\xi_j|&\leq& \dfrac{1}{\log N_{k}}(|\sum_{j=1}^{N_k}\dfrac{1}{j}\xi_j|+\sum_{j=N_k+1}^{N_{k+1}} \dfrac{1}{j}|\xi_j|)\\
&\leq&\dfrac{1}{\log N_{k}}|\sum_{j=1}^{N_k}\dfrac{1}{j}\xi_j|+\dfrac{C}{\log N_{k}} (\log  N_{k+1}-\log N_k)\\
&\to &0 \text{ as $k\to \infty$, a.s.}
\end{eqnarray*}
The last step is because $((1+k)^{\epsilon}-k^{\epsilon})\to 0$ as $k\to \infty$ for any $\epsilon<1$,
 $\dfrac{\log N_{k+1}}{\log N_k}=e^{((1+k)^{\epsilon}-k^{\epsilon})}\to 1$ as $k\to \infty$.

Hence (\ref{eqn}) holds and the proof is done. \QEDA
\end{proof}

\subsection{Proof of Proposition \ref{prop1}}\label{subsection2}

Proposition \ref{prop1}  is a result of the following two lemmas. 
\begin{lemma}\label{lm1}
 Suppose $\{X_n\}$ has conditional local limit theorem \ref{cllt-exp}, then $E[\ell_n]=O(a_n),$ where $a_n=g(0)\sum_{i=1}^n\dfrac{1}{B_i}$.
\end{lemma}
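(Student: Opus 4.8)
The goal is to show $E[\ell_n] = O(a_n)$, where $\ell_n = \sum_{i=1}^n \mathbb 1_{\{S_i=0\}}$ and $a_n = g(0)\sum_{i=1}^n \tfrac{1}{B_i}$. The most direct route is simply to write
\begin{equation*}
E[\ell_n] = \sum_{i=1}^n P(S_i = 0) = \sum_{i=1}^n E\big[P(S_i=0 \mid (X_{i+1}, X_{i+2}, \dots))\big] = \sum_{i=1}^n E\big[P_{T^i}(\mathbb 1_{\{S_i=0\}})\big],
\end{equation*}
and then compare each term $P(S_i=0)$ with $g(0)/B_i$. First I would invoke Definition \ref{cllt-exp}: the $L^\infty$ conditional local limit theorem at $0$ gives constants $g_i \to g(0)$ with $\|B_i P_{T^i}(\mathbb 1_{\{S_i=0\}}) - g_i\|_\infty$ decreasing exponentially fast. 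Taking expectations (which are dominated by the sup norm) yields $|B_i P(S_i=0) - g_i| \le \|B_i P_{T^i}(\mathbb 1_{\{S_i=0\}}) - g_i\|_\infty =: \varepsilon_i$ with $\sum_i \varepsilon_i < \infty$. Hence $B_i P(S_i = 0) = g_i + O(\varepsilon_i)$, so $P(S_i=0) = \tfrac{g_i}{B_i} + O(\tfrac{\varepsilon_i}{B_i})$.

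Next I would sum this estimate over $i = 1, \dots, n$. Since $g_i \to g(0) > 0$, we have $g_i \le C_0$ for all $i$ and some constant $C_0$, so $\sum_{i=1}^n \tfrac{g_i}{B_i} \le C_0 \sum_{i=1}^n \tfrac{1}{B_i} = \tfrac{C_0}{g(0)} a_n$. For the error term, because $\varepsilon_i \to 0$ it is bounded, and $B_i = i^\beta L(i)$ is eventually increasing to infinity, so $\sum_{i=1}^n \tfrac{\varepsilon_i}{B_i}$ is a bounded sequence (indeed convergent, since $\sum_i \varepsilon_i < \infty$ and $1/B_i$ is bounded); in particular it is $O(1) = O(a_n)$ because $a_n \to \infty$. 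Combining the two pieces gives $E[\ell_n] = \sum_{i=1}^n P(S_i=0) \le \tfrac{C_0}{g(0)} a_n + O(1) = O(a_n)$, which is the claim.

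I do not anticipate a serious obstacle here; the only point requiring a little care is the passage from the $L^\infty$ statement to a statement about the unconditional probability $P(S_i=0)$, which is immediate since $P(S_i=0) = E[P_{T^i}(\mathbb 1_{\{S_i=0\}})]$ and $|E[Z]| \le \|Z\|_\infty$. One should also note that finitely many initial terms (before $B_i$ becomes monotone, or before the exponential decay kicks in) contribute only a bounded amount, hence are absorbed into the $O(a_n)$ term since $a_n \to \infty$. A matching lower bound $E[\ell_n] \ge c\, a_n$ follows the same way using $g_i \to g(0) > 0$, so in fact $E[\ell_n] \asymp a_n$, though only the upper bound is needed for Proposition \ref{prop1}.
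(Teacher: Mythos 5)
Your proof is correct and follows essentially the same route as the paper: both write $E[\ell_n] = \sum_{i=1}^n P(S_i = 0)$ and use the $L^\infty$ conditional local limit theorem to compare $P(S_i=0)$ with $g(0)/B_i$, then sum. You are somewhat more explicit about the error-term bookkeeping (the paper simply asserts $B_n P(S_n=0)\to g(0)$ and then writes $\sum P(S_i=0)\sim a_n$, which also needs the divergence of $a_n$ to pass from termwise convergence to the asymptotic of the sum), but the underlying idea is identical.
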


\begin{proof}
Since the convergence in the conditional local limit theorem is in the sense of \ref{cllt-exp}, $B_nP(S_n=0)= B_nE\left(P(S_n=0| T^n)\right)\to g(0)$ as $n\to \infty$. So $$\qquad\qquad\qquad\qquad E(\ell_n)=\sum_{i=1}^nP(S_i=0)\sim a_n=g(0)\sum_{i=1}^n\dfrac{1}{B_i}.\qquad\qquad\qquad\qquad\qquad\qquad\qquad{\QEDA}$$ 
\end{proof}
\begin{lemma}\label{lm2}
When $j>2k,$ and $k,j \to \infty$,
\begin{equation}
E\left(\ell_j-\ell(j,S_{2k})-\ell_{2k}+\ell(2k,S_{2k})\right)^2=O(a_jE[|S_{2k}|^{\frac{\alpha}{1-\alpha}})], 
\end{equation}
where $\alpha\in (0,\frac{1}{2}]$ and $a_n=\sum_{i=1}^n\frac{g(0)}{B_i}.$
\end{lemma}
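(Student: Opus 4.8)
The plan is to exploit the additivity of local time along the orbit of $\tilde T$. Writing $\ell_n = S_n^{\tilde T}(\mathbb 1_A)(\omega,0)$ as in the proof of Theorem \ref{main}, and using $\tilde T^{2k}(\omega,0)=(T^{2k}\omega,S_{2k}(\omega))$, one has the exact cocycle identity
\begin{equation*}
\ell_j(\omega)-\ell_{2k}(\omega) = \sum_{i=2k+1}^{j}\mathbb 1_{\{S_i(\omega)=0\}} = \sum_{m=1}^{j-2k}\mathbb 1_{\{A\}}\big(\tilde T^{m}(T^{2k}\omega,S_{2k}(\omega))\big).
\end{equation*}
On the other hand, the "level-shifted" local time $\ell(j,S_{2k})-\ell(2k,S_{2k})$ counts the visits of $S_i$ to the level $S_{2k}$ for $2k<i\le j$, which is exactly the count of visits of the shifted walk started at $(T^{2k}\omega,0)$ to $A$, i.e. $\sum_{m=1}^{j-2k}\mathbb 1_{\{A\}}(\tilde T^m(T^{2k}\omega,0))$. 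Hence the quantity inside the square is
\begin{equation*}
\Delta_{k,j}(\omega):=\sum_{m=1}^{j-2k}\Big(\mathbb 1_{\{S_{m}\circ T^{2k}=-S_{2k}\}}-\mathbb 1_{\{S_{m}\circ T^{2k}=0\}}\Big)(\omega),
\end{equation*}
a sum of visit-count differences for a walk that, after the first $2k$ steps, evolves like a fresh copy of $(X_n)$ but must be compared at two different levels $0$ and $-S_{2k}$.

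First I would compute $E[\Delta_{k,j}^2]$ by expanding the square into a double sum over $m,m'\in\{1,\dots,j-2k\}$, split into the diagonal plus the two triangular regions $m<m'$ and $m>m'$. For the off-diagonal terms I would use the Markov-type/stationarity structure: conditioning on $\mathcal F_{m}\vee\sigma(X_{n+1},\dots)$ and applying the conditional local limit theorem of Definition \ref{cllt-exp} to the block of $(m'-m)$ further steps, each summand $E\big[(\mathbb 1_{\{\cdot=-S_{2k}\}}-\mathbb 1_{\{\cdot=0\}})(\mathbb 1_{\{\cdot=-S_{2k}\}}-\mathbb 1_{\{\cdot=0\}})\big]$ factors (up to the exponentially small error in Definition \ref{cllt-exp}) as a product of a one-time difference $E(\mathbb 1_{\{S_p=x\}}-\mathbb 1_{\{S_p=0\}})$ against a transfer-operator term of size $O(1/B_{m'-m})$. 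Summing the $O(1/B_\cdot)$ factors over one index yields a factor $O(a_j)$ by definition of $a_n$; summing the one-time differences over the other index and then integrating over the law of $S_{2k}$ invokes hypothesis \eqref{eq:cond2}, giving exactly the bound $O\big(a_j\,E[|S_{2k}|^{\alpha/(1-\alpha)}]\big)$. The diagonal contributes $E[|\Delta_{k,j}|]$, which by the same one-time-difference estimate and \eqref{eq:cond2} is $O(E[|S_{2k}|^{\alpha/(1-\alpha)}])$, hence lower order since $a_j\to\infty$.

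I expect the main obstacle to be making the "factorization" step rigorous: the events $\{S_{m'}\circ T^{2k}=-S_{2k}\}$ involve $S_{2k}$, which is $\mathcal F_{2k}$-measurable and hence correlated with the increment $S_{m'}-S_m$ over steps $2k+m$ through $2k+m'$; one must condition on the appropriate $\sigma$-algebra so that $S_{2k}$ becomes a frozen constant $x$, apply the $L^\infty$ conditional local limit theorem uniformly in that constant (this is where the $\|\cdot\|_\infty$ and the exponential rate in Definition \ref{cllt-exp} are essential, so that summing the error over $m,m'\le j$ stays bounded), and only afterwards integrate in $x$ against the distribution of $S_{2k}$. A secondary technical point is controlling the edge terms where $m'-m$ is small (the local limit approximation has not "kicked in"): these are handled crudely by $\mathbb 1_{\{\cdot=x\}}\le 1$ and absorbed into the constant, using that there are at most $O(j)$ such pairs for each fixed small gap and that \eqref{eq:cond2} already sums the relevant one-step differences. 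Assembling the diagonal and off-diagonal estimates then gives the claimed $O\big(a_j E[|S_{2k}|^{\alpha/(1-\alpha)}]\big)$.
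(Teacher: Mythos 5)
Your overall strategy matches the paper's: write the quantity as a sum of indicator differences $\delta_m:=\mathbb 1_{\{S_m\circ T^{2k}=-S_{2k}\}}-\mathbb 1_{\{S_m\circ T^{2k}=0\}}$, expand the square into a double sum over $m\le m'$, partition over the value $S_{2k}=x$, apply the transfer operator and the $L^\infty$ conditional local limit theorem, and finally invoke hypothesis (\ref{eq:cond2}) to sum the one-time differences. So this is the paper's route. However, the key "factorization" step is stated incorrectly, and this is a genuine gap.

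The off-diagonal summand $E[\delta_m\delta_{m'}\mathbb 1_{\{S_{2k}=x\}}]$ carries a \emph{double} difference (one at $m$ and one at $m'$) and does not factor into a single one-time difference times an $O(1/B_{m'-m})$ term, as you claim. After conditioning, the summand decomposes (in the paper) into pieces of the form
$E\left(P_{T^{j_1}}\left(\mathbb 1_{\{S_{j_1-2k}\circ T^{2k}=z_1-x\}}\mathbb 1_{\{S_{2k}=x\}}\right)\cdot\left(\mathbb 1_{\{S_{j_2-j_1}=x-z_1\}}-\mathbb 1_{\{S_{j_2-j_1}=-z_1\}}\right)\right)$
for $z_1\in\{0,x\}$, and the CLLT makes the first factor $\approx g_{j_1-2k}g_{2k}/(B_{j_1-2k}B_{2k})$ in $L^\infty$ up to exponential error. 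Thus the $O(1/B_\cdot)$ factor is $O(1/B_{j_1-2k})=O(1/B_m)$, coming from the \emph{nearer} block, while the surviving cancellation is the one-time difference $q(j_2-j_1,x)$ at the \emph{gap} $m'-m$; (\ref{eq:cond2}) is then applied to $\sum_{m'}q(m'-m,x)$, and $\sum_m 1/B_m$ yields $a_j$. Your proposal assigns $O(1/B_{m'-m})$ to the gap and the one-time difference to $p=m$, which is the opposite assignment. To justify your version you would have to regroup the double difference as $[P(S_m=-x)-P(S_m=0)][P(S_{m'-m}=0)-P(S_{m'-m}=x)]$ plus a remainder $P(S_m=0)[2P(S_{m'-m}=0)-P(S_{m'-m}=x)-P(S_{m'-m}=-x)]$; the first piece summed over both indices produces $|x|^{2\alpha/(1-\alpha)}$, which is admissible only after the additional observation that $B_{2k}^{\alpha/(1-\alpha)}\lesssim a_j$ (using $j>2k$ and $\beta\alpha/(1-\alpha)=\alpha$), and the second piece is not of your claimed product form. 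None of this is spelled out, so the "factors as a product" claim as written is incorrect.

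Secondly, your diagonal treatment is wrong. The diagonal of the double sum is $\sum_m E[\delta_m^2]=\sum_m E[|\delta_m|]=E\left[\sum_m|\delta_m|\right]$, which is $\ge E[|\Delta_{k,j}|]$ but not equal to it. Its order is $O\left(\sum_m 1/B_m+\sum_m 1/B_{2k+m}\right)=O(a_j)$, not $O(E[|S_{2k}|^{\alpha/(1-\alpha)}])$ as you write. (Both happen to be dominated by $a_jE[|S_{2k}|^{\alpha/(1-\alpha)}]$, so the conclusion is unharmed, but the intermediate claim is false.) You correctly flag the need to freeze $S_{2k}=x$ by conditioning before applying the $L^\infty$ CLLT, which is indeed the crux; but the factorization and diagonal statements as given would not survive being made precise.
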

\begin{remark}
For i.i.d. case, by Kesten and Spizer (1979) \cite{KS}, it is known that $E(\ell(n,x)-\ell(n,y))^2\leq C|x-y|^{\frac{\alpha}{1-\alpha}}n^{\alpha}$ when ${X_n}$ is in the domain of attraction of a stable law of order $d=\frac{1}{1-\alpha}$. 
\end{remark}
\begin{proof}
\begin{eqnarray*}
&&E\left(|\ell_j-\ell(j,S_{2k})-\ell_{2k}+\ell(2k,S_{2k})|^2\right)\\
&=&E\bigg(\sum_{i=2k+1}^j\mathbb 1_{\{S_i=S_{2k}\}}-\mathbb 1_{\{S_i=0\}}\bigg)^2\\
&=&\sum_{x\in\mathbb{Z}}E\bigg[\bigg(\sum_{i=2k+1}^j\mathbb 1_{\{S_i=x\}}-\mathbb 1_{\{S_i=0\}}\bigg)^2\mathbb 1_{\{S_{2k}=x\}}\bigg]\\
&\leq&\sum_{2k+1 \leq j_1\leq j_2 \leq j}2\bigg|\sum_{x\in\mathbb{Z}}E\left(\mathbb 1_{\{S_{j_1}=x\}}(\mathbb 1_{\{S_{j_2}=x\}}-\mathbb 1_{\{S_{j_2}=0\}})\mathbb 1_{\{S_{2k}=x\}}\right)\bigg|\\
&+&\sum_{2k+1 \leq j_1\leq j_2 \leq j}2\bigg|\sum_{x\in\mathbb{Z}}E\left(\mathbb 1_{\{S_{j_1}=0\}}(\mathbb 1_{\{S_{j_2}=x\}}-\mathbb 1_{\{S_{j_2}=0\}})\mathbb 1_{\{S_{2k}=x\}}\right)\bigg|.
\end{eqnarray*}
Due to the similar form of the two terms above,  let $z_1\in \{x,0\}$.
\begin{eqnarray*}
&&E\left(\mathbb 1_{\{S_{j_1}=z_1\}}(\mathbb 1_{\{S_{j_2}=x\}}-\mathbb 1_{\{S_{j_2}=0\}})\mathbb 1_{\{S_{2k}=x\}}\right)\\
&=&E\left(\mathbb 1_{\{S_{j_1-2k}\circ T^{2k}=z_1-x\}}(\mathbb 1_{\{S_{j_2-j_1}\circ T^{j_1}=x-z_1\}}-\mathbb 1_{\{S_{j_2-j_1}\circ T^{j_1}=-z_1\}})\mathbb 1_{\{S_{2k}=x\}}\right)\\
&=& E\left(E\left(\mathbb 1_{\{S_{j_1-2k}\circ T^{2k}=z_1-x\}}\mathbb 1_{\{S_{2k}=x\}}| T^{-{j_1}}\mathcal{F}\right)(\mathbb 1_{\{S_{j_2-j_1}\circ T^{j_1}=x-z_1\}}-\mathbb 1_{\{S_{j_2-j_1}\circ T^{j_1}=-z_1\}})\right)\\
&=& E\left(P_{T^{j_1}}(\mathbb 1_{\{S_{j_1-2k}\circ T^{2k}=z_1-x\}}\mathbb 1_{\{S_{2k}=x\}} )(\mathbb 1_{\{S_{j_2-j_1}=x-z_1\}}-\mathbb 1_{\{S_{j_2-j_1}=-z_1\}})\right).
\end{eqnarray*}
In order to bound this expression consider
$$ B_{j_1-2k}B_{2k}P_{T^{j_1}}(\mathbb 1_{\{S_{j_1-2k}\circ T^{2k}=z_1-x\}}\mathbb 1_{\{S_{2k}=x\}} ) = B_{j_1-2k}P_{T^{j_1-2k}}\left(\mathbb 1_{\{S_{j_1-2k}=z_1-x\}}B_{2k} P_{T^{2k}}(\mathbb 1_{\{S_{2k}=x\}}\right),$$ 
which by the assumption of the $L^\infty$-conditional local limit theorem at $0$,  can be written in the form $ g_{j_1-2k}g_{2k} +Z$, $g_{j_1-2k}g_{2k}$ converging to $g(0)^2$ and $Z$ being a $L^{\infty}$ random variable with $\|Z\|_\infty \le c\theta^{j_1-2k}$. Then, for fixed $j_2-j_1$, and since $B_{2k}P(S_{2k}=x)$ is universally bounded, and using the assumption,
$$ E\left(E\left(\mathbb 1_{\{S_{j_1-2k}\circ T^{2k}=z_1-x\}}\mathbb 1_{\{S_{2k}=x\}}|T^{-{j_1}}\mathcal{B}\right)(\mathbb 1_{\{S_{j_2-j_1}\circ T^{j_1}=x-z_1\}}-\mathbb 1_{\{S_{j_2-j_1}\circ T^{j_1}=-z_1\}})\right)$$
 is bounded by 
$$ C P(S_{2k}=x) \left(\frac{1}{B_{j_1-2k}} q(j_2-j_1,x)+  \theta^{j_1-2k}\frac 1{B_{j_2-j_1}}\right)$$
for some constants $C>0$, where $\sum_{j_2-j_1=1}^\infty q(j_2-j_1,x)\le K x^{\frac{\alpha}{1-\alpha}}$ by (\ref{eq:cond2}). 
Summing over $x, z_1$, then over $j_2$ and finally over $j_1$ shows the lemma.

\QEDA
\end{proof}

\begin{proof}{ of Proposition \ref{prop1}}

Split  $\Var\bigg(\sum_{k=1}^N\dfrac{1}{k}g(\dfrac{\ell_k}{a_k})\bigg)$ up into three parts: $T_1, T_2$ and $T_3$:
\begin{eqnarray*}
&&\Var\bigg(\sum_{k=1}^N\dfrac{1}{k}g(\dfrac{\ell_k}{a_k})\bigg) =E[(\sum_{k=1}^N \dfrac{1}{k} \xi_k)^2]\\
&=&\sum_{k=1}^N \dfrac{1}{k^2} E[\xi_k^2]+2\sum_{1\leq k<j\leq N, 2k\geq j}\dfrac{|E[\xi_k \xi_j]|}{kj}+2\sum_{1\leq k \leq 2k<j\leq N}\dfrac{|E[\xi_k \xi_j]|}{kj}\\
&=&T_1+T_2+T_3.
\end{eqnarray*}

For $T_1$, since $\xi_k$ is bounded, there is a constant $C_1$ such that $T_1 \le C_1(\log N)^2$  for all $N\in \mathbb N$. For $T_2$, there is a constant $C_2$ such that
$$T_2\leq \|g\|_{\infty}\sum_{1\leq k<j\leq N, 2k\geq j}\dfrac{1}{kj}\leq C_2(\log N)^2.$$\\

For $T_3$, since $1\leq k \leq 2k< j\leq N$, let $$f_{(2k,j)}:=\dfrac{1}{a_j} \sum_{i=1}^{j-2k} \mathbb 1_{\{X_{2k+1}+...+X_{2k+i}=0\}}=\dfrac{1}{a_j}(\ell(j,S_{2k})-\ell(2k,S_{2k})),$$
which is  measurable with respect to $\mathcal{F}_{2k+1}^{j}=\sigma(X_{2k+1},...X_{j})$. 
Then
\begin{eqnarray*}
E[\xi_k \xi_j]
&=&cov \left(g(\frac{\ell_k}{a_k}),g(\frac{\ell_j}{a_j})\right)\\
&=&cov\left(g(\frac{\ell_k}{a_k}),g(\frac{\ell_j}{a_j})-g(f_{(2k,j)})\right)+cov\left(g(\frac{\ell_k}{a_k}),g(f_{(2k,j)})\right).
\end{eqnarray*}

Due to the assumption \ref{eq:cond1}, 
$$cov\left(g(\frac{\ell_k}{a_k}),g(f_{(2k,j)})\right)  \leq C (\log \log k)^{-1-\delta} =: C\alpha(k).$$

Because $g$ is  Lipschitz and bounded, $cov\left(g(\frac{\ell_k}{a_k}),g(\frac{\ell_j}{a_j})-g(f_{(2k,j)})\right)\leq CE\left[\left|\dfrac{\ell_j}{a_j}-f_{(2k,j)}\right|\right]$. 
\begin{eqnarray*}
E\left[\left|\dfrac{\ell_j}{a_j}-f_{(2k,j)}\right|\right]
&=&1/a_jE[|(\ell_j-\ell(j,S_{2k}))+\ell(2k,S_{2k})|]\\
&\leq&1/a_jE\left[|(\ell_j-\ell(j,S_{2k}))+\ell(2k,S_{2k})-\ell_{2k}|\right]+1/a_jE[\ell_{2k}].
\end{eqnarray*}

So when $1\leq k \leq 2k< j\leq N$, one has
\begin{eqnarray*}
&&E[\xi_k \xi_j] \\
&\leq& C_1E\left[\left|\dfrac{\ell(0,j)}{a_j}-f_{(2k,j)}\right|\right]+C_2\alpha(k)\\
&\leq& C_1\dfrac{1}{a_j} \bigg(E[|\ell_j-\ell(j,S_{2k})-\ell_{2k}+\ell(2k,S_{2k})|]+E[|\ell_{2k}|]\bigg)
+C_2\alpha(k).\\
\end{eqnarray*}

When
 $2k< j$, Lemma \ref{lm1}, Lemma \ref{lm2} and Jensen's inequality imply that as $k,j \to \infty$, 
 \begin{eqnarray*}
E[\xi_k\xi_j]
&\leq& C_1\dfrac{1}{a_j} \bigg(E[|\ell_j-\ell(j,S_{2k})-\ell_{2k}+\ell(2k,S_{2k})|]+E[|\ell_{2k}|]\bigg)
+C_2\alpha(k)\\
&\leq& C_1\dfrac{1}{a_j}\bigg(\left(E[|S_{2k}|^{\frac{\alpha}{(1-\alpha)}}]\right)^{\frac{1}{2}}a_j^{\frac{1}{2}}+a_{2k}\bigg)+C_2\alpha(k)\\
&\sim&
 C_1\dfrac{1}{a_j}\bigg({B_{2k}}^{\frac{\alpha}{2(1-\alpha)}}a_j^{\frac{1}{2}}+a_{2k}\bigg)+C_2\alpha(k).
\end{eqnarray*}


 Since $B_n=n^{\beta}L(n)$ and $a_n\sim n^{\alpha}\dfrac{1}{L(n)}$ with $\alpha=1-\beta$, one has
\begin{eqnarray*}
T_3&=&\sum_{1\leq k\leq N}\sum_{2k< j\leq N}\dfrac{1}{kj}E(\xi_k \xi_j)\\
&\leq& \sum_{1\leq k\leq N}\sum_{2k< j\leq N}C\dfrac{1}{kj}\bigg((\dfrac{2k}{j})^{\frac{\alpha}{2}}\dfrac{L(2k)^{\frac{\alpha}{2(1-\alpha)}}}{L(j)^{-1/2}}+(\dfrac{2k}{j})^{\alpha}\dfrac{L(j)}{L(2k)}+\alpha(k)\bigg).
\end{eqnarray*}
Since we assume $L(n)\to c$,
\begin{eqnarray*}
T_3&\leq& \sum_{1\leq k\leq N}\sum_{2k< j\leq N}C\dfrac{1}{kj}(\dfrac{k}{j})^{\frac{\alpha}{2}}+\sum_{1\leq k\leq N}\sum_{2k< j\leq N} C\dfrac{1}{kj}\alpha(k)\\
&=&T_{31}+T_{32}.
\end{eqnarray*}

$T_{31}\leq C\sum_{1\leq j \leq N}\dfrac{1}{j^{1+\alpha/2}}\sum_{1\leq k< j}\dfrac{1}{k^{1-\alpha/2}}= 
O(\log N)$.

And since $\alpha(k)=O((\log \log k)^{-1-\delta})$, by integration by parts and that $\dfrac{x}{(\log x)^{2+\delta}}$ is an increasing function of $x$ when $x$ is big enough, 
\begin{eqnarray*}
T_{32}\leq C\sum_{k=1}^N \dfrac{\alpha(k)}{k}\sum_{j=k}^{N} \dfrac{1}{j}&\leq&C\sum_{k=1}^N \dfrac{\log N}{k (\log \log k)^{1+\delta}}=O(\log^2 N (\log\log N)^{-(1+\delta)}).
\end{eqnarray*}

So 
$T_3=O((\log\log N)^{-1-\delta}\log^2N),$ as $N\to \infty$. Hence Proposition \ref{prop1} is proved.\QEDA
\end{proof}

\section{Transfer operators}\label{subsection3}

We turn to the investigation  of conditions (\ref{eq:cond1}) and (\ref{eq:cond2}) and show that they can be derived from the theory of transfer operators in dynamics.
Define the characteristic function operator $P_t: L^1(P)\to L^1(P)$ by $P_tf:=P_{T}(e^{it\phi}f)$, which is the perturbation of $P_T$. By induction, $P^n_t f=P_{T^n}(e^{itS_n}f)$. Let the space $\mathcal{L}$ be the subspace in $L^1(P)$ of all functions with norm: $\|f\|:=\|f\|_{\infty}+D_f$ where $D_f$ is the Lipschitz constant of $f$.  
 We assume  that $P_t$ acts on $\mathcal L$ and has the following properties:
 \begin{itemize}
\item There exists $\delta>0$, such that when $t\in C_{\delta}:=[-\delta, \delta]$, $P_t$ has a representation: $P_t=\lambda_t\pi_t+N_t$, $\pi_t N_t=N_t\pi_t=0,$ and the $\pi_t$ is a one-dimensional projection generated by an eigenfunction $V_t$ of $P_t$, i.e. $P_t V_t=\lambda_t V_t.$  It implies that $P_t^n=\lambda_t^n\pi_t+N_t^n$. 
\item There exists constants $K, K_1$ and $\theta_1<1$ such that on $C_{\delta}$, $\|\pi_t\|\leq K_1$,  $\|N_t\|\leq \theta_1<1$, $|\lambda_t|\leq 1-K|t|^d$.
\item There exists $\theta_2<1$ such that  for $|t|>\delta$, $\|P_t\|\leq \theta_2<1.$ 
\item $\phi $ is Lipschitz continuous.
\end{itemize}
We end up this article by proving conditions  \ref{eq:cond1} and \ref{eq:cond2} under these assumptions. This also will complete the proof of the corollary \ref{GM} in Section 1 since from \cite{Jon3}, one can see that Gibbs-Markov maps satisfy all the assumptions  above. An example not satisfying the above condition can be derived for functions of the fractional Brownian motion as in \cite{DeZh}.

\begin{proof}[Proof of (\ref{eq:cond1})]

Let $g(\frac{\ell_k}{a_k}):=\int_{\Omega}g(\frac{\ell_k}{a_k})dP+\hat{g}:=C_k+\hat{g}$, then by $P_{2k}^T=P+N^{2k}$, 
\begin{eqnarray*}
&&cov \left(g(\frac{\ell_k}{a_k}),F\circ T^{2k})\right)\\
&=&\int_{\Omega} g(\frac{\ell_k}{a_k}) (F\circ T^{2k})dP-\int_{\Omega} g(\frac{\ell_k}{a_k})dP\int_{\Omega}  F\circ T^{2k} dP\\
&=&\int_{\Omega} P^{2k}_T\left(g(\frac{\ell_k}{a_k})\right) F dP-\int_{\Omega} g(\frac{\ell_k}{a_k})dP\int_{\Omega}  F dP\\
&=&\int_{\Omega} N^{2k}(\hat{g}) F dP\\
&\leq&\|N^{2k}(\hat{g})\|\|F\|_1\\
&\leq&C\theta_1^{2k}\|g\|\\
&\leq&C(\log \log (2k))^{-1-\delta}.
\end{eqnarray*}

\end{proof}

\begin{proof}[Proof of  (\ref{eq:cond2})]

Let  $\zeta=(e^{-itx}- 1)$.  Then
\begin{eqnarray*}
E(\ell(n,x)-\ell_n) &=&\sum_{1\leq j \leq n}E(\mathbb 1_{\{S_{j}=x\}}-\mathbb 1_{\{S_{j}=0\}}) \\
=\sum_{1\leq j \leq n} \zeta E\left(\int_{[-\pi,\pi]} e^{itS_{j}} ~dt\right)
&=&\sum_{1\leq j \leq n} \int_{[-\pi,\pi]} \zeta E[P_{t}^{j}\mathbb 1]dt
\end{eqnarray*}
and 
\begin{eqnarray*}
&&\int_{[-\pi,\pi]} \zeta E[P_{t}^{j} \mathbb 1]dt=Re\int_{[-\pi,\pi]} \zeta E[P_{t}^{j}\mathbb 1] dt\\
&&=Re\int_{C_{\delta}} \zeta \lambda^{j}_{t}E[\pi_t \mathbb 1]dt  + Re\int_{C_{\delta}} \zeta E[N_{t}^{j} \mathbb 1 dt + Re\int_{\bar{C_{\delta}} }\zeta E[P_{t}^{j} \mathbb 1]dt.
\end{eqnarray*}

On $C_{\delta}$, one has $\|N_t^n\|\leq \theta_1^n$ where $|\theta_1|<1$, whence 
\begin{eqnarray*}
\left|\sum_{j=1}^n Re\int_{C_\delta} \zeta E[N_t^j] dt\right|
&\le &\sum_{j=1}^n\left|\int_{C_{\delta}} Re\left[\zeta E[N_{t}^{j} \mathbb 1]\right]dt\right|\\
&\leq& 2\sum_{j=1}^n\int_{C_{\delta}}   \|N_{t}^{j}\|dt
\leq 4\delta \frac {1}{1-\theta_1}.
\end{eqnarray*}
On $\bar{C_\delta}$, one has $\|P_t^j\|\le \theta_2$ so that
\begin{eqnarray*}
\left|\sum_{j=1}^n Re\int_{\bar{C_\delta}} \zeta E[P_t^j] dt\right|
&\le &\sum_{j=1}^n\left|\int_{\bar{C_\delta}} Re\left[\zeta E[P_{t}^{j} \mathbb 1]\right]dt\right|\\
&\leq& 2\sum_{j=1}^n\int_{\bar{C_\delta}}   \|P_{t}^{j}\|dt
\leq 4\pi \frac {1}{1-\theta_2}.
\end{eqnarray*}

The main part is the first term.  We show that for some constant $C>0$
\begin{equation*} \left|\sum_{j=1}^nRe \int_{C_{\delta}} \zeta \lambda^{j}_{t}\pi_t 1dt\right|\leq  C|x|^{\frac{\alpha}{1-\alpha}}dt
\end{equation*}

Let $\pi_t \mathbb 1= W_t$, so $\|W_t\}\le \|\pi_t\|\le  K_1$. Then
\begin{equation*}
\left|\sum_{j=1}^nRe\int_{C_{\delta}} \zeta \lambda^{j}_{t} W_tdt\right|
\leq \sum_{j=1}^n| \int_{C_{\delta}} Re(\lambda_{t}^{j}W_t)\bigg(\cos(xt)-1\bigg) W_tdt|+\sum_{j=1}^n|\int_{C_{\delta}} Im(\lambda_{t}^{j}W_t)\sin(xt) dt|.
\end{equation*}

Since $|Re \lambda_{t}^{j_2}W_t|\leq K_1|\lambda_{t}^{j_2}|\leq (1-Ct^d)^{j}$,  we have 
\begin{equation*}
\sum_{j=1}^n \left|\int_{C_{\delta}} Re(\lambda_{t}^{j}W_t)\bigg(cos(xt)-1)\bigg) dt\right|
\leq K_1C^{-1}\int_{[-\delta, \delta]} \frac{1}{|t|^d} |\cos (xt)-1)|~dt.
\end{equation*}
If $|\delta x|<1$, then $(1-\cos tx)\leq |tx|^2\leq |tx|^d,$ so
\begin{eqnarray*}
&&\int_{[-\delta, \delta]} \frac{1}{|t|^d} |1-\cos tx|~dt
\leq 2\int_{[0, 1/x]} \frac{1}{(t)^d} |tx|^d~dt=2~ |x|^{d-1}.
\end{eqnarray*}
If $|\delta x|\geq 1$, then
\begin{eqnarray*}
&&\int_{[-\delta, \delta]} \frac{1}{|t|^d} |1-\cos tx|~dt
\leq 2\int_{[0, 1/x]} \frac{1}{t^d} |tx|^2~dt+2\int_{[ 1/|x|,\infty]} \frac{1}{(t)^d} ~dt\leq C'~ |x|^{d-1}.
\end{eqnarray*}
for some constant $C'$.

Next we consider the second part  $\sum_{j_2}\left|\int_{C_{\delta}} Im(\lambda_{t}^{j}W_t)\sin(xt) dt\right|$.
\begin{eqnarray*}
\sum_{j=1}^n\left|\int_{C_{\delta}} Im(\lambda_{t}^{j}W_t)\sin(xt) dt\right|
&\leq& C^{-1}\int_{-{\delta}}^{{\delta}}\frac{1}{|t|^d}|\sin(xt)| dt\\
&\leq& 2C ^{-1}|x|^{d-1}\int_{0}^{{|x|\delta}}\frac{1}{u^d}|\sin(u)| du\\
\end{eqnarray*}

If $|x|\delta \leq 1$, then
\begin{eqnarray*}
\int_{0}^{{|x|\delta}}\frac{1}{u^d}|\sin(u)| du&\leq& \int_{0}^{1}u^{1-d} du<\infty.
\end{eqnarray*}

If $|x|\delta \geq 1$, then
\begin{eqnarray*}
\int_{0}^{{|x|\delta}}\frac{1}{u^d}|\sin(u)| du&\leq& \int_{0}^{1}u^{1-d} du+\int_{1}^{\delta |x|}\frac{1}{u^d} du <\infty.
\end{eqnarray*}

So 
\begin{equation*}
\sum_{j}^n\left|\int_{C_{\delta}} Im(\lambda_{t}^{j}W_t)\sin(xt)dt\right|
\leq C' |x|^{d-1}
\end{equation*}
for some constant $C'$ is proved.
\QEDA
\end{proof}

\end{document}